\numberwithin{equation}{section}
\def\p{\partial}
\def\o{\overline}
\def\b{\bar}
\def\mc{\mathcal}
\def\n{\nabla}
\def\w{\wedge}
\def\m{\omega}
\def\p{\partial}
\def\o{\overline}
\def\b{\bar}
\def\mc{\mathcal}
\def\n{\nabla}
\def\w{\wedge}
\newtheorem{thm}{Theorem}[section]
\newtheorem{lemma}[thm]{Lemma}
\newtheorem{cor}[thm]{Corollary}
\theoremstyle{definition}
\newtheorem{rem}[thm]{Remark}
\newtheorem{ex}[thm]{Example}
\theoremstyle{definition}
\newtheorem{defn}[thm]{Definition}
\newcommand{\be}{\begin{eqnarray}}
\newcommand{\ee}{\end{eqnarray}}
\newcommand{\comment}[1]{}
\begin{document}

\title{Chern forms of holomorphic Finsler vector bundles and some applications}
\author[Huitao Feng]{Huitao Feng$^1$}
\author[Kefeng Liu]{Kefeng Liu$^2$}
\author[Xueyuan Wan]{Xueyuan Wan$^3$}
\address{Huitao Feng: Chern Institute of Mathematics \& LPMC,
Nankai University, Tianjin, China}

\email{fht@nankai.edu.cn}

\address{Kefeng Liu: Department of Mathematics, UCLA, Los Angeles, USA}

\email{liu@math.ucla.edu}

\address{Xueyuan Wan: Chern Institute of Mathematics \& LPMC,
Nankai University, Tianjin, China}

\email{xywan@mail.nankai.edu.cn}

\thanks{$^1$~Partially supported by NSFC (Grant No. 11221091, 11271062)}

\thanks{$^3$~Partially supported by NSFC (Grant No. 11221091)
and the Ph.D. Candidate Research Innovation Fund of Nankai University}

\begin{abstract}  In this paper, we present two kinds of total Chern forms $c(E,G)$ and $\mathcal{C}(E,G)$ as well as a total Segre form $s(E,G)$ of a holomorphic Finsler vector bundle $\pi:(E,G)\to M$ expressed by the Finsler metric $G$, which answers a question of J. Faran (\cite{Faran}) to some extent. As some applications, we show that the signed Segre forms $(-1)^ks_k(E,G)$ are positive $(k,k)$-forms on $M$ when $G$ is of positive Kobayashi curvature; we prove, under an extra assumption, that a Finsler-Einstein vector bundle in the sense of Kobayashi is semi-stable; we introduce a new definition of a flat Finsler metric, which is weaker than Aikou's one (\cite{Aikou}) and prove that a holomorphic vector bundle is Finsler flat in our sense if and only if it is Hermitian flat.
\end{abstract}
\maketitle

\setcounter{section}{-1}
\tableofcontents
\section{Introduction} \label{s0}
Let $M$ be a complex manifold and let $\pi:E\to M$ be a holomorphic vector bundle. A Finsler metric $G$
on $E$ is a continuous function $G:E\to\mathbb{R}$ satisfying the following conditions:

\begin{description}
  \item [F1)] $G$ is smooth on $E^o=E\setminus O$, where $O$ denotes the zero section of $E$;
  \item[F2)] $G(z,v)\geq 0$ for all $(z,v)\in E$ with $z\in M$ and $v\in\pi^{-1}(z)$, and $G(z,v)=0$ if and only if $v=0$;
  \item[F3)] $G(z,\lambda v)=|\lambda|^2G(z,v)$ for all $\lambda\in\mathbb{C}$.
\end{description}

A holomorphic vector bundle $E$ with a complex Finsler metric $G$ is called a holomorphic Finsler vector bundle. In applications, one often requires that $G$ is strongly pseudo-convex, that is,

\begin{description}
  \item[F4)] the Levi form ${\sqrt{-1}}\partial\bar\partial G$ on $E^o$ is positive-definite along fibres $E_z=\pi^{-1}(z)$ for $z\in M$.
\end{description}

Clearly, any Hermitian metric on $E$ is naturally a strongly pseudo-convex complex Finsler metric on it. Note that there is also a concept of complex Finsler vector bundle over a smooth manifold, but we will not discuss this topic in this paper.

A most notable feature of a Finsler metric $G$ on $E$ is the homogeneous condition {\bf F3)}, by which, the Levi form ${\sqrt{-1}}\partial\bar\partial G$ can be viewed naturally as a real $(1,1)$-form on total space $E^o$ of the fibration $\pi:E^o\to M$. Moreover, if $G$ is strongly pseudo-convex, then by {\bf F4)}, the Levi form ${\sqrt{-1}}\partial\bar\partial G$ induces naturally Hermitian metrics, which is simply denoted by $h^G$, on the pull-back bundle $\pi^*E$ as well as the tautological line bundle $\mathcal{O}_{P(E)}(-1)$ over the projectivized space $P(E)$ of $E$ (cf. \cite{Ko1}, \cite{Ko2}). Different from the Hermitian case, here the induced metrics $h^G$ on $\pi^*E$ and $\mathcal{O}_{P(E)}(-1)$ depends on the points in  the fibres $P(E_z)$, $z\in M$.

Essentially, the geometry of holomorphic Finsler vector bundles can be considered as an important part of Hermitian geometry, and is closely related to the study of algebraic geometry. Note that $\mathcal{O}_{P(E)}(-1)$ can be obtained from $E$ by blowing up the zero section of $E$ to $P(E)$, the manifold $\mathcal{O}_{P(E)}(-1)\setminus O$ is biholomorphic to $E^o$. From this Kobayashi pointed out explicitly in \cite{Ko1} there is a one to one correspondence  $G\leftrightarrow h^G$, which we will call the Kobayashi correspondence, between the complex Finsler metrics on $E$ and the Hermitian metrics on $\mathcal{O}_{P(E)}(-1)$. Starting from this important observation, S. Kobayashi in \cite{Ko1} obtained an equivalent characterization of the ampleness in algebraic geometry by using a special curvature of a strongly pseudo-convex Finsler metric. This characterization gives a way to study the ampleness, positivity, (semi-)stability of holomorphic vector bundles from the point view of complex Finsler geometry.

Through this paper we always assume that $M$ is a closed complex manifold of dimension $n$ and $\pi:E\to M$ a holomorphic vector bundle of rank $r$, and the Finsler metrics involved are strongly pseudo-convex.

In this paper we mainly aim to construct Chern forms and Segre forms of a holomorphic Finsler vector bundle $\pi:(E,G)\to M$ by using the Finsler metric $G$ on $E$ and to explore some applications of these differential forms.

For readers convenience, we will give a brief review of basic definitions, notations and facts in the geometry of holomorphic Finsler vector bundles in Section 1. Especially, we recall a kind of curvature $\Psi$ of a holomorphic Finsler metric first introduced by S. Kobayashi in an explicit way in \cite{Ko1}, which we will call the Kobayashi curvature. The Kobayashi curvature $\Psi$ plays a very important role in many aspects in the geometry of holomorphic Finsler vector bundles.

In Section 2 we begin with the study of J. Faran's following question (cf. \cite{Faran}) :

\emph{Is it possible to define Chern forms of a holomorphic vector bundle using complex Finsler structures?}

Our approach follows the usual way to deal with Chern classes in Hermitian geometry, that is, to work with the tautological line bundle $\mathcal{O}_{P(E)}(-1)$ over $P(E)$. More precisely, let $\mathcal{O}_{P(E)}(1)$ denote the dual line bundle of $\mathcal{O}_{P(E)}(-1)$ and let $\Xi=c_1(\mathcal{O}_{P(E)}(1),(h^G)^{-1})$ denote the first Chern form of $(\mathcal{O}_{P(E)}(1),(h^G)^{-1})$. The key point here is to make use of the closed $(r-1,r-1)$-form $\Xi^{r-1}$ to push-down closed forms on $P(E)$ to ones on $M$. In the process, the form $\Xi^{r-1}$ serves as a ``Thom form" of the holomorphic fibration $\pi:P(E)\to M$. In this way, we obtain two kinds of total Chern forms $c(E,G)$ and $\mathcal{C}(E,G)$ as well as a total Segre form $s(E,G)$ of $(E,G)$ expressed by the Finsler metric $G$ on $E$. Moreover, we also get a Bott-Chern transgression term between these two kinds of total Chern forms. Here we mention a related work \cite{IDA}, in which C. Ida defined a kind of horizontal Chern forms by using a partial connection on the vertical tangent bundle of $P(E)\to M$. Note that these forms are defined on the total space $P(E)$ and the relationship with the classical Chern forms is not clear yet, as the author pointed out in his paper. As an application of Chern forms and Segre forms constructed in this section, we show that the signed Segre forms $(-1)^ks_k(E,G)$ are positive $(k,k)$-forms on $M$ when $G$ has the positive Kobayashi curvature.
There are lots works in this direction. For examples, in \cite{BG}, S. Bloch and D. Gieseker proved that all Chern classes of an ample vector bundle $E$ are numerically positive, that is, the paring $\langle c_k(E),[N]\rangle>0$ for any subvariety $N$ in $M$. W. Fulton and R. Lazarsfeld \cite{Fulton} generalized this result of Bloch and Gieseker and proved that all the Schur polynomials in the Chern classes of an ample bundle $E$ are numerically positive, which solved a related conjecture of Griffith. Note that the signed Segre classes are particular Schur polynomials in the Chern classes, so all signed Segre classes are also numerically positive. In \cite{Guler}, Guler showed that if $(E,h)$ is Griffiths positive, then the signed Segre forms $(-1)^{k}s_{k}(E,h)$ are positive $(k,k)$-forms for $k=1,\cdots, n$. To give a pure differential geometric proof of Fulton-Lazarsfeld's result is still an interesting question.

The semi-stability of a holomorphic vector bundle is an algebro-geometric concept. For a holomorphic vector bundle over a K\"{a}hler manifold, Kobayashi \cite{Ko2} introduced a notion of Finsler-Einstein metric and asked that whether a Finsler-Einstein vector bundle is semi-stable or not. In Section 3, by using the first Chern form $\mathcal{C}_1(E,G)$, we prove, under an extra assumption, that a Finsler-Einsler vector bundle in the sense of Kobayashi is semi-stable. Note that in \cite{Sun}, Sun discussed some properties of ``Finsler-Einstein" vector bundles in the sense of T. Aikou. However, Aikou's definition seems to be too strong for application. In this section we also prove a Kobayashi-L\"{u}bke type inequality related to Chern forms $\mathcal{C}_1(E,G)$ and $\mathcal{C}_2(E,G)$ of a Finsler-Einstein vector bundle $(E,G)$, which generalizes a recent result of Diverio in the Hermitian case.

By using a partial connection, Aikou \cite{Aikou1}, \cite{Aikou} introduced a notion of flatness for a holomorphic Finsler vector bundle and proved that a holomorphic vector bundle is Finsler flat in his sense if and only if it is Hermitian flat. In our opinion, Aikou's definition is too strong for use. In Section 4, we will give a new definition of a flat Finsler vector bundle based on the Kobayashi curvature. Moreover, by making use of a result of B. Berndtsson in \cite{Ber} and the first Chern form $\mathcal{C}_1(E,G)$, we prove that a holomorphic vector bundle is Finsler flat in our sense if and only if it admits a flat Hermitian metriic.
$$ \ $$
\noindent{\bf Acknowledgement:} The first author would like to thank Professor Weiping Zhang in Chern Institute of Math., Nankai Univ. for the encouragement on his study of Finsler geometry.

\section{Notations and some basic facts} \label{s1}

In this section, to make this paper easy to read, we fix some notations and recall some basic definitions and facts used in this paper. For more detail we refer to references \cite{AP}, \cite{Aikou}, \cite{Cao-Wong}, \cite{Gr-Har}, \cite{Ko1} and \cite{Ko2}.

Let $z=(z^1,\cdots,z^n)$ be a local coordinate system in $M$, and let $v=(v^1,\cdots,v^r)$ be the fibre coordinate system defined by a local holomorphic frame $s=\{s_1,\cdots,s_r\}$ of $E$. We write
\begin{align}\label{1.1111}
& G_{i}=\partial G/\partial v^{i},\quad G_{\bar j}=\partial G/\partial\bar{v}^{j},\quad G_{i\bar{j}}=\partial^{2}G/\partial v^{i}\partial\bar{v}^{j},\\
& G_{i\alpha}=\partial^{2}G/\partial v^{i}\partial z^{\alpha}, \quad G_{i\bar j\bar\beta}=\partial^3G/\partial v^{i}\partial\bar v^j\partial\bar z^{\beta},\quad etc.,
\end{align}
to denote the differentiation with respect to $v^i,\bar v^j$ ($1\leq i,j\leq r$), $z^\alpha,\bar z^\beta$ ($1\leq\alpha,\beta\leq n$). In the following lemma we collect some useful identities related to a Finsler metric $G$, in which we adopt the summation convention of Einstein. For a proof we refer to Kobayashi \cite{Ko2}.
\begin{lemma}\label{l.1}(cf. \cite{Ko2}) The following identities hold for any $(z,v)\in E^o$, $\lambda\in \mathbb{C}$:
\begin{align}\label{1.2}
G_i(z,\lambda v)=\bar\lambda G_i(z,v),\quad G_{i\bar j}(z,\lambda v)=G_{i\bar j}(z,v)=\bar G_{j\bar i}(z,v);
\end{align}
\begin{align}\label{1.3}
G_i(z,v)v^i=G_{\bar j}(z,v)\bar v^j=G_{i\bar j}(z,v)v^i\bar v^j=G(z,v);
\end{align}
\begin{align}\label{1.4}
G_{ij}(z,v)v^i=G_{i\bar j k}(z,v)v^i=G_{i\bar j\bar k}(z,v)\bar v^j=0.
\end{align}
\end{lemma}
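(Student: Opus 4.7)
The plan is to derive all three groups of identities from the single homogeneity condition \textbf{F3)}, namely $G(z,\lambda v)=\lambda\bar\lambda G(z,v)$, by successively differentiating in the appropriate variables (in $v^i$, $\bar v^j$, or in $\lambda$ and $\bar\lambda$, which are treated as independent complex variables). This is the standard Euler-homogeneity trick applied to a function that is bihomogeneous of bidegree $(1,1)$ in $(v,\bar v)$; there is no real obstacle, only bookkeeping between holomorphic and antiholomorphic derivatives.

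For \eqref{1.2}, I would differentiate $G(z,\lambda v)=\lambda\bar\lambda G(z,v)$ in $v^i$. The chain rule on the left gives $\lambda\, G_i(z,\lambda v)$, while the right side produces $\lambda\bar\lambda\, G_i(z,v)$; cancelling $\lambda$ yields $G_i(z,\lambda v)=\bar\lambda G_i(z,v)$. Differentiating this once more in $\bar v^j$ gives $G_{i\bar j}(z,\lambda v)=G_{i\bar j}(z,v)$. Finally, since $G$ is real-valued, the elementary identity $\overline{\partial^2 G/\partial v^i\partial\bar v^j}=\partial^2 G/\partial v^j\partial\bar v^i$ gives $G_{i\bar j}=\overline{G_{j\bar i}}$.

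For \eqref{1.3}, I would differentiate $G(z,\lambda v)=\lambda\bar\lambda G(z,v)$ with respect to the scalar $\lambda$, treating $\lambda,\bar\lambda$ as independent. The left-hand side produces $v^i G_i(z,\lambda v)$ while the right-hand side gives $\bar\lambda G(z,v)$; specializing to $\lambda=1$ yields $v^i G_i(z,v)=G(z,v)$. Conjugation (or a parallel derivative in $\bar\lambda$) gives $\bar v^j G_{\bar j}(z,v)=G(z,v)$. Differentiating $v^i G_i(z,v)=G(z,v)$ in $\bar v^j$ then produces $v^i G_{i\bar j}(z,v)=G_{\bar j}(z,v)$, and contracting with $\bar v^j$ and applying the previous step closes the chain.

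For \eqref{1.4}, I would differentiate the identity $v^i G_i(z,v)=G(z,v)$ in $v^j$ to get $G_j(z,v)+v^i G_{ij}(z,v)=G_j(z,v)$, hence $v^i G_{ij}(z,v)=0$. Applying the same procedure to $v^i G_{i\bar j}(z,v)=G_{\bar j}(z,v)$, differentiating in $v^k$, yields $v^i G_{i\bar j k}(z,v)=0$; and differentiating the conjugate relation $\bar v^j G_{i\bar j}(z,v)=G_i(z,v)$ in $\bar v^k$ yields $\bar v^j G_{i\bar j\bar k}(z,v)=0$. This exhausts all the identities stated in Lemma~\ref{l.1}.
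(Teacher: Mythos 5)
Your proof is correct and is exactly the standard Euler-homogeneity argument: the paper gives no proof of its own but refers to Kobayashi \cite{Ko2}, where the identities are obtained by the same repeated differentiation of $G(z,\lambda v)=\lambda\bar\lambda G(z,v)$ in $v$, $\bar v$, $\lambda$, $\bar\lambda$. The only cosmetic point is that cancelling $\lambda$ (resp.\ $\bar\lambda$) presumes $\lambda\neq 0$, which is harmless since for $\lambda=0$ the derivatives at the zero section are not in play ($G$ is only smooth on $E^o$).
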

Now when restricted to the fiber $E^o_z=\pi^{-1}(z)$ of $\pi:E^o\to M$, the Levi form can be written as
\begin{align}\label{1.5}
i^*_z\left({\sqrt{-1}}\partial\bar\partial G\right)={\sqrt{-1}}G_{i\bar j}(z,v)dv^id\bar v^j,
\end{align}
where $i_z:E_z^o\hookrightarrow E^o$ is the natural inclusion. Clearly, by the condition {\bf F4)} the Hermitian matrices $\left(G_{i\bar j}(z,v)\right)$ is positive definite and actually defines an Hermitian metric $h^G$ on the pull-back bundle $p:\pi^*E\to E^o$. From Lemma \ref{l.1} (\ref{1.2}),
the data $\left(G_{i\bar j}(z,v)\right)$ can be viewed naturally as the data defined on $P(E)$. Hence $h^G$ is actually an Hermitian metric
on the pull-back bundle $p:\pi^*E\to P(E)$. Also, as a subbundle of $\pi^*E$, the tautological line bundle
\begin{align}\label{1.6}
\mathcal{O}_{P(E)}(-1)=\left\{((z,[v]),Z)|Z=\lambda v, \lambda\in\mathbb{C}\right\}
\end{align}
inherits an Hermitian metric from $h^G$ on $\pi^*E$, which is still denoted by $h^G$. One verifies easily that $h^G(Z,Z)=G(Z)$ for any $Z\in \mathcal{O}_{P(E)}(-1)$.
Let $\nabla^{\pi^*E}$ denote the Chern connection on the holomorphic vector bundle $p:(\pi^*E,h^G)\to E^o$. With respect to the local holomorphic frame $s=\{s_1,\cdots,s_r\}$ of $\pi^*E$, where we abuse the notation $s_k$ to denote the sections of $\pi^*E$, then the Chern connection $(1,0)$-forms $\theta^k_i$ are given by
\begin{align}\label{1.5111}
\nabla^{\pi^*E}s_i=\theta^k_i\otimes s_k,\quad \theta^k_i=(\partial G_{i\bar j})G^{\bar j k}=\Gamma^k_{i\alpha}dz^\alpha+\gamma^k_{il}dv^l,
\end{align}
\begin{align}\label{1.5112}
\Gamma^k_{i\alpha}={{\partial G_{i\bar j}}\over{\partial z^\alpha}}G^{\bar j k},\quad \gamma^k_{il}={{\partial G_{i\bar j}}\over{\partial v^l}}G^{\bar j k},
\end{align}
where $(G^{{\bar j} k})$ denote the inverse of the matrix $(G_{i\bar j})$. From Lemma \ref{l.1}, one sees that
\begin{align}\label{1.5113}
\Gamma^k_{i\alpha}(z,\lambda v)=\Gamma^k_{i\alpha}(z,v),\quad \gamma^k_{il}(z,v)v^i=\gamma^k_{il}(z,v)v^l=0.
\end{align}
When $G$ is induced from an Hermitian metric on $E$, the data $\gamma^k_{il}$ vanish and $\Gamma^k_{i\alpha}$ descend to the base manifold $M$.

By using the Chern connection $\nabla^{\pi^*E}$ one gets a smooth h-v decomposition of the holomorphic tangent vector bundle $TE^o$ of $E^o$ (cf. \cite{Cao-Wong}, Sect.5):
\begin{align}\label{1.6111}
TE^o=\mathcal{H}\oplus\mathcal{V},
\end{align}
where $\mathcal{V}$ is called the vertical subbundle of $TE^o$ defined by
\begin{align}\label{1.6112}
\mathcal{V}=\ker(p_*:TE^o\to TM),
\end{align}
and $\mathcal{H}$ is called the horizontal subbundle of of $TE^o$ defined by
\begin{align}\label{1.6113}
\mathcal{H}=\ker(\nabla^{\pi^*E}_\bullet P:TE^o\to\pi^*E),
\end{align}
where $P$ is the tautological section of the bundle $p:\pi^*E\to E^0$ defined by $P(z,v)=v.$ Clearly, the vertical subbundle $\mathcal{V}$ is holomorphically isomorphic to $\pi^*E$ canonically, while the horizontal subbundle $\mathcal{H}$ is isomorphic to $\pi^*TM\to E^o$ smoothly. In local coordinates, one has
\begin{align}\label{1.6114}
\mathcal{H}={\rm span}_\mathbb{C}\left\{{\delta\over{\delta z^\alpha}}={\partial\over{\partial z^\alpha}}-\Gamma^k_{j\alpha}v^j{\partial\over{\partial v^k}},\ 1\leq\alpha\leq n\right\},\quad \mathcal{V}={\rm span}_\mathbb{C}\left\{{\partial\over{\partial v^i}},\ 1\leq i\leq r\right\}.
\end{align}
Moreover, the dual bundle $T^*E^o$ also has a smooth h-v decomposition $T^*E^o=\mathcal{H}^*\oplus\mathcal{V}^*$ with
\begin{align}\label{1.6115}
\mathcal{H}^*={\rm span}_\mathbb{C}\left\{dz^\alpha,\ 1\leq\alpha\leq n\right\},\quad \mathcal{V}^*={\rm span}_\mathbb{C}\left\{\delta v^k=dv^k+\Gamma^k_{j\alpha}v^jdz^\alpha,\ 1\leq i\leq r\right\}.
\end{align}
From (\ref{1.5113}), the natural action of the group $\mathbb{C}^*$ on $E^o$ preserves the horizontal subbundle $\mathcal{H}$. As a result, the quotient map $q:E^o\to P(E)=E^o/{\mathbb{C}^*}$ induces a smooth h-v decompositions of $TP(E)$ and $T^*P(E)$
\begin{align}\label{1.61111}
TP(E)=\tilde{\mathcal{H}}\oplus\tilde{\mathcal{V}},\ T^*P(E)={\tilde{\mathcal{H}}}^*\oplus{\tilde{\mathcal{V}}}^*,
\end{align}
where
\begin{align}\label{1.88888}
\tilde{\mathcal{H}}=q_*\mathcal{H},\quad\tilde{\mathcal{V}}=q_*\mathcal{V}.
\end{align}
By using the h-v decompositions above, one sees that
\begin{align}\label{1.6119}
\omega_{FS}={{\sqrt{-1}}\over{2\pi}}{{\partial^2\log G}\over{\partial v^i\partial\bar v^j}}\delta v^i\wedge\delta\bar v^j
\end{align}
is a well-defined $(1,1)$-form on $E^o$. Note that $\omega_{FS}$ is actually a vertical $(1,1)$-form on $P(E)$ and when restricted to each fibre of the fibration $\pi:P(E)\to M$, gives a K\"{a}hler metric on the fibre.
Let $R^{\pi^*E}$ denote the curvature of $\nabla^{\pi^*E}$. With respect to the local holomorphic frame $s=\{s_1,\cdots,s_r\}$, the Chern curvature $(1,1)$-forms are given by
\begin{align}\label{1.6116}
R^{\pi^*E}s_i=\Theta^k_i\otimes s_k,\quad \Theta^k_i=d\theta^k_i-\theta^l_i\wedge\theta^k_i=\bar\partial\theta^k_l.
\end{align}
Set
\begin{align}\label{1.7}
\Psi={\sqrt{-1}}{{h^G(R^{\pi^*E}P,P)}\over{h^G(P,P)}}.
\end{align}
Clearly, $\Psi$ is a well-defined real $(1,1)$-form on $E^o$. Moreover, in local coordinate systems $(z^\alpha,v^i)$, $\Psi$ can be written as (cf. \cite{Ko1}, \cite{Ko2}, page 149)
\begin{align}\label{1.8}
\Psi={\sqrt{-1}}K_{i\bar j \alpha\bar\beta}{{v^i\bar v^j}\over G}dz^\alpha\wedge d\bar z^\beta,\quad
K_{i\bar j \alpha\bar\beta}=-G_{i\bar j \alpha\bar\beta}+G^{k\bar l}G_{i\bar l\alpha}G_{k\bar j\bar\beta},
\end{align}
From (\ref{1.8}) one sees easily that $\Psi$ is actually a horizontal $(1,1)$-form defined on $P(E)$. We note that when $G$ comes from a Hermitian metric $h$ on $E$, then the form $\Psi$ descends to $M$ and is just the holomorphic bisectional curvature of $h$.

\begin{defn}\label{d1.1}
 The form $\Psi$ defined by (\ref{1.7}) is called the Kobayashi curvature of the holomorphic Finsler vector bundle $(E,G)$. A strongly pseudo-convex complex Finsler metric $G$ is said to be of positive (resp. negative) Kobayashi curvature if for any nonzero horizontal vectors $X\in\mathcal{H}$, one has
\begin{align}\label{1.9}
-{\sqrt{-1}}\Psi(X,\bar X)>0\ ({\mbox{\rm resp.}}\ <0),
\end{align}
or equivalently, the matrix $(K_{i\bar j \alpha\bar\beta}v^i\bar v^j)$
is positive (resp. negative)-definite on $E^o$.
\end{defn}

Now consider the hyperplane line bundle $(\mathcal{O}_{P(E)}(1)$, which is the dual of $\mathcal{O}_{P(E)}(-1)$ with the dual metric $(h^G)^{-1}$. Then $\partial\bar\partial\log G$ gives the curvature of $(h^G)^{-1}$ and
\begin{align}\label{1.10}
\Xi:=c_1(\mathcal{O}_{P(E)}(1),(h^G)^{-1})={{\sqrt{-1}}\over{2\pi}}\partial\bar\partial\log G.
\end{align}
is the first Chern form associated to the metric $(h^G)^{-1}$.

For readers' convenience, we give a proof of the following very important lemma due to Kobayashi and Aikou (cf. \cite{Ko1}, \cite{Ko2}, \cite{Aikou}).
\begin{lemma}\label{l.2}(Kobayashi, Aikou) Let $\pi:E\to M$ be a holomorphic vector bundle with a strongly pseudo-convex complex Finsler metric $G$. Then
\begin{align}\label{1.11}
\Xi=c_1(\mathcal{O}_{P(E)}(1),(h^G)^{-1})=-{1\over{2\pi}}\Psi+\omega_{FS}.
\end{align}
\end{lemma}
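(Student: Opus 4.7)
The plan is to compute $\partial\bar\partial\log G$ in local coordinates and to re-express the result in the horizontal-vertical basis $\{dz^\alpha,\delta v^i\}$ of $T^{*(1,0)}E^o$; combined with the identity $\Xi = \frac{\sqrt{-1}}{2\pi}\partial\bar\partial\log G$ already given in (\ref{1.10}), this will yield the decomposition. The key technical inputs, both direct consequences of Lemma \ref{l.1} and the formula $\Gamma^i_{k\alpha} = G^{\bar m i}G_{k\bar m\alpha}$, are the identities $G_i\Gamma^i_{k\alpha}v^k = G_\alpha$ and $G_{i\bar j}\Gamma^i_{k\alpha}v^k = G_{\alpha\bar j}$ together with their conjugates. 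Substituting $dv^i = \delta v^i - \Gamma^i_{k\alpha}v^k dz^\alpha$ in $\partial G = G_\alpha dz^\alpha + G_i dv^i$ and applying the first identity produces the striking simplification $\partial\log G = \frac{G_i}{G}\delta v^i$, i.e.\ $\partial\log G$ is purely vertical.

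Applying $\bar\partial$ then splits into two pieces, $\bar\partial\partial\log G = \bar\partial(G_i/G)\wedge \delta v^i + (G_i/G)\,\bar\partial\delta v^i$. A symmetric use of the key identities reduces $\bar\partial(G_i/G)$ to $M_{i\bar j}\delta\bar v^j$, where $M_{i\bar j} = \frac{G_{i\bar j}}{G} - \frac{G_iG_{\bar j}}{G^2} = \frac{\partial^2\log G}{\partial v^i\partial\bar v^j}$; this piece is purely vertical and, after multiplication by $\frac{\sqrt{-1}}{2\pi}$ and tracking the sign from $\partial\bar\partial = -\bar\partial\partial$, produces $\omega_{FS}$. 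For the second piece I expand $\bar\partial\delta v^i = v^j\,\bar\partial\Gamma^i_{j\alpha}\wedge dz^\alpha$ in the basis $\{d\bar z^\beta,\delta\bar v^l\}$ using the horizontal antiholomorphic derivation $\delta_{\bar\beta} := \partial/\partial\bar z^\beta - \overline{\Gamma^l_{k\beta}}\bar v^k\,\partial/\partial\bar v^l$. The $\delta\bar v^l\wedge dz^\alpha$ contribution then vanishes upon contraction with $G_iv^j/G$, because $G_iv^j\,\partial\Gamma^i_{j\alpha}/\partial\bar v^l = \partial G_\alpha/\partial\bar v^l - G_{i\bar l}v^j\Gamma^i_{j\alpha} = G_{\alpha\bar l} - G_{\alpha\bar l} = 0$.

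The main obstacle is showing that the surviving $d\bar z^\beta\wedge dz^\alpha$ coefficient delivers the Kobayashi curvature, namely $G_iv^j\,\delta_{\bar\beta}\Gamma^i_{j\alpha} = -K_{i\bar j\alpha\bar\beta}v^i\bar v^j$. I would first observe that $\delta_{\bar\beta}G_i = G_{i\bar\beta} - G_{i\bar l}\overline{\Gamma^l_{k\beta}}\bar v^k = 0$, by the conjugate of the second key identity; the Leibniz rule then gives $G_iv^j\,\delta_{\bar\beta}\Gamma^i_{j\alpha} = \delta_{\bar\beta}(G_iv^j\Gamma^i_{j\alpha}) = \delta_{\bar\beta}G_\alpha = G_{\alpha\bar\beta} - \overline{\Gamma^l_{k\beta}}\bar v^k\,G_{\alpha\bar l}$. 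A final round of the same identities rewrites $\overline{\Gamma^l_{k\beta}}\bar v^k G_{\alpha\bar l}$ as $G^{\bar l k}G_{i\bar l\alpha}G_{k\bar j\bar\beta}v^i\bar v^j$, and the Euler-type identity $G_{\alpha\bar\beta} = G_{i\bar j\alpha\bar\beta}v^i\bar v^j$ packages everything as $-K_{i\bar j\alpha\bar\beta}v^i\bar v^j$. Assembling the vertical and horizontal pieces, dividing by $G$, and multiplying by $\frac{\sqrt{-1}}{2\pi}$ yields $\Xi = -\frac{1}{2\pi}\Psi + \omega_{FS}$ as claimed.
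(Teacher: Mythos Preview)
Your proof is correct. Both you and the paper carry out a direct local computation using the Euler identities of Lemma~\ref{l.1} and the definition of $\Gamma^i_{j\alpha}$, but the organization differs. The paper starts from the right-hand side: it expands $\omega_{FS}=\frac{\sqrt{-1}}{2\pi}\frac{\partial^2\log G}{\partial v^j\partial\bar v^k}\delta v^j\wedge\delta\bar v^k$ back into the $dv,d\bar v,dz,d\bar z$ basis, checks that the mixed $dz\wedge d\bar v$ and $dv\wedge d\bar z$ terms agree with those of $\partial\bar\partial\log G$, and then computes the difference $\partial\bar\partial\log G-\frac{\partial^2\log G}{\partial v^j\partial\bar v^k}\delta v^j\wedge\delta\bar v^k$ in the $dz^\alpha\wedge d\bar z^\beta$ direction to obtain $-K_{i\bar j\alpha\bar\beta}v^i\bar v^j/G$. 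You instead work from the left-hand side and first isolate the striking intermediate fact $\partial\log G=(G_i/G)\,\delta v^i$, i.e.\ $\partial\log G$ is purely vertical; applying $\bar\partial$ then forces the h-v decomposition of $\Xi$ directly. Your route packages the same identities more economically and makes transparent \emph{why} no mixed $dz\wedge\delta\bar v$ terms survive (they are killed at the level of $\partial\log G$ rather than by a term-by-term cancellation), at the cost of having to verify a few extra vanishing statements such as $\delta_{\bar\beta}G_i=0$. Either way the substance is the same local computation.
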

\begin{proof}
We do computations by using the definitions (\ref{1.5112}) and (\ref{1.6115}):
\begin{align*}
\frac{\partial^{2}\log G}{\partial v^{j}\partial\bar{v}^{k}}\delta v^{j}\wedge\delta\bar{v}^{k}&=\frac{\partial^{2}\log G}{\partial v^{j}\partial\bar{v}^{k}}d v^{j}\wedge d\bar{v}^{k}+\frac{\partial^{2}\log G}{\partial v^{j}\partial\bar{v}^{k}}\Gamma^{j}_{l\alpha}v^{l}dz^{\alpha}\wedge d\bar{v}^{k}\\
&+\frac{\partial^{2}\log G}{\partial v^{j}\partial\bar{v}^{k}}\overline{\Gamma^{k}_{s\beta}v^{s}}d v^{j}\wedge d\bar{z}^{\beta}
+\frac{\partial^{2}\log G}{\partial v^{j}\partial\bar{v}^{k}}\Gamma^{j}_{l\alpha}v^{l}\overline{\Gamma^{k}_{s\beta}v^{s}}dz^{\alpha}\wedge d\bar{z}^{\beta},
\end{align*}
and
\begin{align*}
\frac{\partial^{2}\log G}{\partial v^{j}\partial\bar{v}^{k}}\Gamma^{j}_{l\alpha}v^{l}=\frac{GG_{j\bar{k}}-G_{j}G_{\bar{k}}}{G^{2}}G_{\bar{h}\alpha}G^{\bar{h}j}=\frac{\partial^{2}\log G}{\partial z^{\alpha}\partial\bar{v}^{k}},
\end{align*}
$$\frac{\partial^{2}\log G}{\partial v^{j}\partial\bar{v}^{k}}\overline{\Gamma^{k}_{s\beta}v^{s}}=\overline{\frac{\partial^{2}\log G}{\partial v^{k}\partial\bar{v}^{j}}\Gamma^{k}_{s\beta}v^{s}}=\frac{\partial^{2}\log G}{\partial \bar{z}^{\beta}\partial v^{j}},$$
\begin{align*}
\frac{\partial^{2}\log G}{\partial v^{j}\partial\bar{v}^{k}}\Gamma^{j}_{l\alpha}v^{l}\overline{\Gamma^{k}_{s\beta}v^{s}}=\frac{1}{G^{2}}(GG_{\bar{k}\alpha}G_{h\bar{\beta}}
G^{\bar{k}h}-G_{\alpha}G_{\beta}).
\end{align*}
Thus by (\ref{1.8}), one gets
\begin{align*}
&\partial\bar{\partial}\log G-\frac{\partial^{2}\log G}{\partial v^{j}\partial\bar{v}^{k}}\delta v^{j}\wedge\delta\bar{v}^{k}\\
&=[\frac{1}{G^{2}}(GG_{\alpha\bar{\beta}}-G_{\alpha}G_{\beta})-\frac{1}{G^{2}}(GG_{\bar{k}\alpha}G_{h\bar{\beta}}G^{\bar{k}h}
-G_{\alpha}G_{\beta})]dz^{\alpha}
\wedge d\bar{z}^{\beta}\\
&=\frac{1}{G}(G_{\alpha\bar{\beta}}-G_{\bar{k}\alpha}G_{h\bar{\beta}}G^{\bar{k}h})dz^{\alpha}
\wedge d\bar{z}^{\beta}\\
&=-K_{i\bar{j}\alpha\bar{\beta}}\frac{v^{i}\bar{v}^{j}}{G}dz^{\alpha}\wedge d\bar{z}^{\beta}={\sqrt{-1}}\Psi.
\end{align*}
\end{proof}

When the strongly pseudo-convex Finsler metric $G$ is of negative Kobayashi curvature, then by (\ref{1.11}) the first Chern form $\Xi=c_1(\mathcal{O}_{P(E)}(1),(h^G)^{-1})$ is a closed positive $(1,1)$-from on $P(E)$, which gives a K\"{a}hler form on $P(E)$. So in this case $P(E)$ is a K\"{a}helr manifold with the K\"{a}hler form $\Xi$.

We recall that a holomorphic vector bundle $E\to M$ over a closed complex manifold $M$ is ample in the sense of Hartshorne (cf. \cite{Hart}) if and only if
there is an Hermitian metric $h$ on $\mathcal{O}_{P(E^*)}(1)$ such that the first Chern form $c_1(\mathcal{O}_{P(E^*)}(1),h)$ is  positive. From this equivalent characterization of the ampleness of $E$ and Lemma \ref{l.2}, one has the following Kobayashi's characterization of the ampleness (cf. \cite{Ko1}):
\begin{thm}\label{t.1}(Kobayashi,1975) A holomorphic vector bundle $E\to M$ over a closed complex manifold $M$ is ample in the sense of Hartshorn if and only if there exists a Finsler metric $G$ on the dual bundle $E^{*}$ of $E$ with  negative Kobayashi curvature.
\end{thm}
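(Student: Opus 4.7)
The plan is to reduce the theorem to Lemma~\ref{l.2} and to the Kobayashi correspondence $G\leftrightarrow h^G$ between strongly pseudo-convex Finsler metrics on $E^*$ and Hermitian metrics on $\mathcal{O}_{P(E^*)}(-1)$, so that both directions become statements about positivity of the $(1,1)$-form $\Xi=c_1(\mathcal{O}_{P(E^*)}(1),(h^G)^{-1})=-\tfrac{1}{2\pi}\Psi+\omega_{FS}$ on $P(E^*)$. Throughout, I would work with the h-v decomposition $TP(E^*)=\tilde{\mathcal{H}}\oplus\tilde{\mathcal{V}}$ recalled in (\ref{1.61111})--(\ref{1.88888}), so that $\omega_{FS}$ is purely vertical and, by (\ref{1.8}), $\Psi$ is purely horizontal. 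This is the structural fact that makes the two pieces of $\Xi$ independent.

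For the ``if'' direction I would start from a strongly pseudo-convex Finsler metric $G$ on $E^*$ with negative Kobayashi curvature. By Definition~\ref{d1.1} the expression $-\sqrt{-1}\Psi(X,\bar X)$ is strictly positive for every nonzero horizontal $X\in\tilde{\mathcal{H}}$, so $-\tfrac{1}{2\pi}\Psi$ is a positive Hermitian form on $\tilde{\mathcal{H}}$. Since $\omega_{FS}$ restricts to a K\"ahler form on each projectivized fibre and is supported on $\tilde{\mathcal{V}}$, the sum $\Xi=-\tfrac{1}{2\pi}\Psi+\omega_{FS}$ given by Lemma~\ref{l.2} is block-diagonal in the h-v decomposition and positive definite on both blocks, hence a positive $(1,1)$-form on $P(E^*)$. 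Then $(h^G)^{-1}$ is a Hermitian metric on $\mathcal{O}_{P(E^*)}(1)$ with positive first Chern form, which is exactly the Hartshorne criterion for $E$ to be ample.

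For the ``only if'' direction I would start with an Hermitian metric $h$ on $\mathcal{O}_{P(E^*)}(1)$ with positive first Chern form, given by Hartshorne's criterion. Applying the Kobayashi correspondence to $h^{-1}$ on $\mathcal{O}_{P(E^*)}(-1)$ produces a strongly pseudo-convex Finsler metric $G$ on $E^*$ with $h^G=h^{-1}$, and I would invoke Lemma~\ref{l.2} again to write $c_1(\mathcal{O}_{P(E^*)}(1),h)=-\tfrac{1}{2\pi}\Psi+\omega_{FS}>0$. Restricting this inequality to a nonzero horizontal vector $X\in\tilde{\mathcal{H}}$ kills the vertical term $\omega_{FS}(X,\bar X)=0$ and yields $-\sqrt{-1}\Psi(X,\bar X)>0$, i.e.\ $G$ has negative Kobayashi curvature in the sense of Definition~\ref{d1.1}.

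The substantive content is really already packaged into Lemma~\ref{l.2}, so the main point I would need to be careful about is the verification that the horizontal/vertical splitting of $\Xi$ behaves well: one has to check that $\Psi$ has no mixed $dz^\alpha\wedge\delta\bar v^k$ components and that $\omega_{FS}$ has no mixed $\delta v^j\wedge d\bar z^\beta$ components, so that positivity on each block implies positivity on the whole tangent space of $P(E^*)$. Both facts are already visible in formulas (\ref{1.6119}) and (\ref{1.8}) after passing from $(dv^j,d\bar v^k)$ to $(\delta v^j,\delta\bar v^k)$ via (\ref{1.6115}); this is the only small verification needed beyond quoting Lemma~\ref{l.2} and Hartshorne's characterization of ampleness.
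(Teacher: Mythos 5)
Your proposal follows exactly the paper's route: the paper obtains Theorem \ref{t.1} by combining Hartshorne's metric characterization of ampleness (positivity of some $c_1(\mathcal{O}_{P(E^*)}(1),h)$) with Lemma \ref{l.2}, and your horizontal/vertical block-diagonality of $\Xi=-\frac{1}{2\pi}\Psi+\omega_{FS}$ is precisely the verification the paper leaves implicit. In substance the argument is correct, but two points should be fixed or made explicit. First, a sign misquote of Definition \ref{d1.1}: there, \emph{negative} Kobayashi curvature means $-\sqrt{-1}\Psi(X,\bar X)<0$ for all nonzero horizontal $X$, equivalently $(K_{i\bar j\alpha\bar\beta}v^i\bar v^j)$ negative definite, whereas you write $-\sqrt{-1}\Psi(X,\bar X)>0$ in both directions of the proof. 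Under the paper's positivity convention (\ref{2.111}) (a $(1,1)$-form $\phi$ is positive when $-\sqrt{-1}\phi(X,\bar X)>0$), your quoted inequality would make $-\frac{1}{2\pi}\Psi$ \emph{negative} on $\tilde{\mathcal{H}}$, not positive; the two sign errors cancel, so your final equivalences ($\Xi>0$ on $P(E^*)$ iff $G$ has negative Kobayashi curvature) are the right ones, but the intermediate inequalities should read $-\sqrt{-1}\Psi(X,\bar X)<0$.

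Second, in the ``only if'' direction the Kobayashi correspondence by itself only produces a Finsler metric $G$ on $E^*$ with $h^G=h^{-1}$; strong pseudo-convexity (condition {\bf F4)}), which is needed even to define $\Psi$ and to invoke Lemma \ref{l.2}, is not automatic and has to be extracted from the hypothesis. It does follow from the fibrewise positivity of $c_1(\mathcal{O}_{P(E^*)}(1),h)$: restricted to a fibre this positivity says $(\log G)_{i\bar j}w^i\bar w^j>0$ for $w\notin\mathbb{C}v$, and since $G_{i\bar j}=G\left((\log G)_{i\bar j}+G_iG_{\bar j}/G^2\right)$ with $(\log G)_{i\bar j}v^i=0$ and $G_iv^i=G$ by (\ref{1.3}), one gets $(G_{i\bar j})>0$. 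The paper glosses over this step too, but since you assert strong pseudo-convexity as an output of the correspondence, this one-line check belongs in the proof.
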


\section{Segre forms and Chern forms from Finsler metrics} \label{s2}

In this section, we give two kinds of Chern forms $c_k(E,G)$ and $\mathcal{C}_k(E,G)$ as well as Segre forms $s_k(E,G)$ of a holomorphic Finsler vector bundle $(E,G)$, which are closed $(k,k)$-forms on $M$ expressed by the strongly pseudo-convex complex Finsler metric $G$ and represent the Chern classes $c_k(E)$ and Segre classes $s_j(E)$ of $E$, respectively, where $0\leq k\leq r$, $0\leq j\leq n$, and work out a Bott-Chern transgression term of $\mathcal{C}(E,G)$ and $c(E,G)$. So to some extent, we answer Faran's question mentioned in the introduction of this paper. As an application, we will show that the signed Segre forms $s_k(E,G)$ are positive $(k,k)$ forms on $M$ for the positive Finsler metric $G$.

As in the introduction of this paper, we denote $c_1(\mathcal{O}_{P(E)}(1),(h^G)^{-1})$ by $\Xi$, which is a closed $(1,1)$-form on $P(E)$.
From the following well-known isomorphism (cf. \cite{BT}, \S21)
\begin{align}\label{2.1}
T(P(E)/M)\cong\left({p^*E/{\mathcal{O}_{P(E)}(-1)}}\right)\otimes\mathcal{O}_{P(E)}(1)),
\end{align}
where $T(P(E)/M)$ denote the vertical tangent bundle (or relative tangent bundle) of the fibration $\pi:P(E)\to M$, one gets for any $z\in M$ that
\begin{align}\label{2.2}
ri^*_zc_1(\mathcal{O}_{P(E)}(1))^{r-1}=c_{r-1}(P(E_z))=e(P(E_z)),
\end{align}
where $i_z:P(E_z)\hookrightarrow P(E)$ is the natural inclusion. As a result, one has
\begin{lemma}\label{l.2.1} $\Xi^{r-1}$ is a closed $(r-1,r-1)$-form on $P(E)$, and for any $z\in M$,
\begin{align}\label{2.3}
\int_{P(E_z)}i^*_z\Xi^{r-1}=1.
\end{align}
\end{lemma}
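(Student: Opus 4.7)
The first assertion is essentially free. Since $\Xi$ is by construction the first Chern form of the holomorphic line bundle $(\mathcal{O}_{P(E)}(1),(h^G)^{-1})$, and the metric $h^G$ is smooth on $\mathcal{O}_{P(E)}(-1)$ (as $G$ is smooth on $E^o$ and $h^G(Z,Z)=G(Z)$), the form $\Xi$ is a smooth closed real $(1,1)$-form on $P(E)$. In particular $d(\Xi^{r-1}) = (r-1)(d\Xi)\wedge\Xi^{r-2} = 0$, so $\Xi^{r-1}$ is a smooth closed $(r-1,r-1)$-form on $P(E)$.

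For the fiberwise integration formula (\ref{2.3}), my plan is to exploit the cohomological content already displayed in (\ref{2.2}). The fiber $P(E_z)$ is biholomorphic to $\mathbb{CP}^{r-1}$, and the natural inclusion $i_z:P(E_z)\hookrightarrow P(E)$ is a holomorphic embedding satisfying $i_z^*\mathcal{O}_{P(E)}(1) \cong \mathcal{O}_{P(E_z)}(1)$; hence $i_z^*\Xi$ is a bona fide first Chern form of the hyperplane bundle $\mathcal{O}_{\mathbb{CP}^{r-1}}(1)$ with respect to the restricted metric. Integrating both sides of the identity (\ref{2.2}) over $P(E_z)$ and applying Gauss-Bonnet-Chern, I would obtain
\begin{align*}
r\int_{P(E_z)} i_z^*\Xi^{r-1} \;=\; \int_{P(E_z)} e(TP(E_z)) \;=\; \chi(\mathbb{CP}^{r-1}) \;=\; r,
\end{align*}
and dividing by $r$ yields the desired equality (\ref{2.3}).

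A more elementary alternative, which avoids invoking (\ref{2.2}) explicitly, is to note that by Stokes's theorem the integral $\int_{\mathbb{CP}^{r-1}} i_z^*\Xi^{r-1}$ depends only on the de~Rham class $c_1(\mathcal{O}_{\mathbb{CP}^{r-1}}(1))^{r-1}$ and not on the chosen Hermitian metric on $\mathcal{O}(1)$; one may therefore evaluate it against the Fubini-Study representative (or, cohomologically, by pairing with the fundamental class $[\mathbb{CP}^{r-1}]$) to get $1$. No genuine obstacle arises, since the statement is purely topological-cohomological once the fiberwise identification $\mathcal{O}_{P(E)}(1)|_{P(E_z)} = \mathcal{O}_{P(E_z)}(1)$ is in place.
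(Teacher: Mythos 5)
Your proposal is correct and follows essentially the paper's own route: the closedness is immediate from $\Xi$ being the Chern form of a smooth metric, and the normalization $\int_{P(E_z)}i_z^*\Xi^{r-1}=1$ is deduced from the relative Euler-sequence identity (\ref{2.2}) together with $\int_{P(E_z)}e(P(E_z))=\chi(\mathbb{CP}^{r-1})=r$, exactly as the paper intends; your remark that the integral only depends on the class $c_1(\mathcal{O}_{P(E_z)}(1))^{r-1}$ correctly justifies passing from the cohomological identity to the integral of the specific form $i_z^*\Xi^{r-1}$.
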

So $\Xi^{r-1}$ serves as a ``Thom form" on $P(E)$ and helps to push-down closed forms on $P(E)$ to ones on $M$.

By the standard Chern-Weil theory (cf. \cite{Zhang}), the total Chern form $c(\pi^*E,h^G)$ of $(\pi^*E,h^G)$ defined by
\begin{align}\label{2.4}
c(\pi^*E,h^G)=\det\left(I+\frac{\sqrt{-1}}{2\pi}R^{\pi^{*}E}\right)\in\bigoplus_{k=0}^r\mathcal{A}^{(k,k)}(P(E))
\end{align}
is a closed form on $P(E)$ and represents the total Chern class of $\pi^*E$. Let $c_k(\pi^*E,h^G)$ denote the $(k,k)$-part in $c(\pi^*E,h^E)$, which is the $k^{th}$ Chern forms of $(\pi^*E,h^G)$ and represents the $k^{th}$ Chern class of $E$ for $0\leq k\leq r$.

Now by using $\Xi^{r-1}$, we can define the following Chern forms $c_k(E,G)$ and total Chern form $c(E,G)$ by
\begin{align}\label{2.5}
c_k(E,G)=\int_{P(E)/M}c_k(\pi^*E,h^G)\Xi^{r-1},\quad c(E,G)=\sum^r_{k=0}c_k(E,G).
\end{align}
\begin{lemma}\label{l2.2} For each $k$, $0\leq k\leq r$, the Chern form $c_k(E,G)$ is a closed $(k,k)$-form on $M$ and represents the $k^{th}$ Chern class $c_k(E)$ of $E$.
\end{lemma}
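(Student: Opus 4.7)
The plan is to verify the three assertions — bidegree $(k,k)$, $d$-closedness, and correct cohomology class — in that order, the first two being routine and only the third requiring any real argument.

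First, the bidegree is immediate. The Chern form $c_k(\pi^*E,h^G)$ is of type $(k,k)$ on $P(E)$ and $\Xi^{r-1}$ is of type $(r-1,r-1)$, so the integrand $c_k(\pi^*E,h^G)\wedge\Xi^{r-1}$ has bidegree $(k+r-1,k+r-1)$. The fiber $P(E_z)\cong\mathbb{CP}^{r-1}$ is compact of complex dimension $r-1$, so integration along the fiber of the holomorphic submersion $\pi:P(E)\to M$ lowers the bidegree by exactly $(r-1,r-1)$, producing a $(k,k)$-form on $M$. Closedness then follows from Chern-Weil ($dc_k(\pi^*E,h^G)=0$) together with $d\Xi=0$, and the Stokes identity $d\circ\pi_{*}=\pi_{*}\circ d$ for fiber integration over closed fibers:
\begin{equation*}
d\,c_k(E,G)=\int_{P(E)/M}d\bigl(c_k(\pi^*E,h^G)\wedge\Xi^{r-1}\bigr)=0.
\end{equation*}

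For the cohomology class, I would compare $c_k(\pi^*E,h^G)$ with the pullback of a reference Chern form. Fix any Hermitian metric $h_0$ on $E$ and set $\beta_k=c_k(E,h_0)$, a closed $(k,k)$-form on $M$ representing $c_k(E)$. By naturality of Chern classes, $\pi^*\beta_k$ and $c_k(\pi^*E,h^G)$ both represent $c_k(\pi^*E)=\pi^*c_k(E)$ in $H^{2k}(P(E);\mathbb{C})$, so there is a global $(2k-1)$-form $\eta$ on $P(E)$ with
\begin{equation*}
c_k(\pi^*E,h^G)-\pi^*\beta_k=d\eta.
\end{equation*}
Wedging with the closed form $\Xi^{r-1}$ and integrating along the fiber, the projection formula combined with Lemma~\ref{l.2.1} gives
\begin{equation*}
\int_{P(E)/M}\pi^*\beta_k\wedge\Xi^{r-1}=\beta_k\cdot\int_{P(E)/M}\Xi^{r-1}=\beta_k,
\end{equation*}
so
\begin{equation*}
c_k(E,G)-\beta_k=\int_{P(E)/M}d(\eta\wedge\Xi^{r-1})=d\!\int_{P(E)/M}\eta\wedge\Xi^{r-1},
\end{equation*}
which is $d$-exact on $M$. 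Hence $[c_k(E,G)]=[\beta_k]=c_k(E)\in H^{2k}(M;\mathbb{C})$.

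The main obstacle is small and entirely bookkeeping: one must invoke, at the level of forms (not merely cohomology classes), the two standard identities for fiber integration over a proper holomorphic submersion with compact fibers, namely $d\circ\pi_{*}=\pi_{*}\circ d$ and the projection formula $\pi_{*}(\pi^{*}\alpha\wedge\gamma)=\alpha\wedge\pi_{*}\gamma$. Both are classical and follow from a local Fubini argument using the fact that $\pi$ is a proper submersion and $\Xi^{r-1}$ plays the role of a ``Thom form'' in the sense of Lemma~\ref{l.2.1}; no estimate or global analysis is needed.
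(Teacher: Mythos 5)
Your proof is correct, and it follows the same basic skeleton as the paper (compare with the pullback of a Hermitian Chern form, use Lemma \ref{l.2.1} to see that $\Xi^{r-1}$ integrates to $1$ over each fibre, and push forward), but the mechanism for identifying the cohomology class is genuinely different. You invoke only the de Rham-level fact that $c_k(\pi^*E,h^G)$ and $\pi^*c_k(E,h_0)$ represent the same class on $P(E)$, hence differ by $d\eta$, and then use $d\circ\pi_*=\pi_*\circ d$ together with the projection formula; this is shorter and needs nothing beyond standard fibre-integration identities, and it fully proves the lemma as stated. The paper instead runs the explicit Bott--Chern transgression of \cite{Bo-C} along the path of metrics $h_t=t\pi^*h+(1-t)h^G$, producing a concrete $\xi$ with $c(\pi^*E,h^G)-\pi^*c(E,h)=\partial\bar\partial\xi$ and hence the $\partial\bar\partial$-exact identity (\ref{2.10}), $c(E,G)=c(E,h)+\partial\bar\partial\tilde\xi$. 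What that extra effort buys is a statement at the Bott--Chern level rather than merely $d$-exactness: the explicit potential $\tilde\xi$ is reused in the proof of Lemma \ref{l2.3} and in the construction of the transgression term ${\tilde c}(E,G;h)$ in (\ref{2.c1})--(\ref{2.22}), which your abstract $d\eta$ would not provide (and on a general, possibly non-K\"ahler, compact complex manifold one cannot upgrade $d$-exactness of a $(k,k)$-form to $\partial\bar\partial$-exactness for free). So your argument is a valid and more economical proof of this particular lemma, while the paper's computation is structured to feed the later results.
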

\begin{proof} Giving any Hermitian metric $h$ on $E$, we compute explicitly the transgression term of the total Chern form $c(E,G)$ and
$\pi^*c(E,h)$. Note that
\begin{align}\label{2.6}
\pi^*c(E,h)=c(\pi^*E,\pi^*h)=\det\left(I+{{\sqrt{-1}}\over{2\pi}}\pi^*R^E\right),
\end{align}
where $R^E$ denote the curvature of the Chern connection $\nabla^E$ determined by the Hermitian metric $h$ on $E$.
Now consider the family of Hermitian metrics $h_t=t\pi^*h+(1-t)h^G$, $0\leq t\leq 1$ on $\pi^*E$. Let $\nabla_t$ be the Chern connection on $\pi^*E$ associated to the metric $h_t$ and let $R_t$ denote the curvature of $\nabla_t$. Then by \cite{Bo-C}, Proposition 3.28, one has
\begin{align}\label{2.7}
c(\pi^*E,h^G)-\pi^*c(E,h)=c(\pi^*E,h^G)-c(\pi^*E,\pi^*h)=\partial\bar{\partial}\xi,
\end{align}
where
\begin{align}\label{2.8}
\xi=\int_{0}^{1}\sum_{i=1}^{r}{\rm Det}(I+\frac{\sqrt{-1}}{2\pi}R_t,\cdots,\frac{\sqrt{-1}}{2\pi}\dot{L}_t,\cdots,I+\frac{\sqrt{-1}}{2\pi}R_t)dt.
\end{align}
where ${\rm Det}$, the polarization of $\det$, is the symmetric $r$-linear form on $\Omega(P(E),{\rm End}(\pi^*E))$, whose restriction on the diagonal is $\det$; and $\dot{L}_t=({dh_t}/dt)h^{-1}_t$. Set
\begin{align}\label{2.9}
\tilde\xi=\int_{P(E)/M}\xi\wedge\Xi^{r-1}.
\end{align}
One has from (\ref{2.5}), (\ref{2.7}) and Lemma \ref{l.2.1},
\begin{eqnarray*}
c(E,G)-c(E,h)&=&\int_{P(E)/M}c(\pi^*E,h^G)\Xi^{r-1}-\int_{P(E)/M}\pi^*c(E,h)\Xi^{r-1}\\
&=&\int_{P(E)/M}\left(c(\pi^*E,h^G)-c(\pi^*E,\pi^*h)\right)\Xi^{r-1}\\
&=&\int_{P(E)/M}(\partial\bar{\partial}\xi)\Xi^{r-1}=\int_{P(E)/M}\partial\bar{\partial}(\xi\Xi^{r-1})\\
&=&\partial\bar{\partial}\int_{P(E)/M}\xi\Xi^{r-1}=\partial\bar{\partial}\tilde\xi.
\end{eqnarray*}
Now from the equality
\begin{align}\label{2.10}
c(E,G)=c(E,h)+\partial\bar{\partial}\tilde\xi\in\bigoplus_{k=0}^r\mathcal{A}^{(k,k)}(M),
\end{align}
one sees easily that total Chern form $c(E,G)$ and the Chern forms $c_k(E,G)$ are closed froms on $M$ and represent the total Chern class $c(E)$ and $c_k(E)$, respectively.
\end{proof}

In the following, we will give another kind of Chern forms expressed by Finsler metrics through the so-called Segre forms. Topologically, Segre classes
$s_j(E)$ of $E$, as the cohomology classes in $H^*(M,\mathbb{Z})$, are defined as the direct image $\pi_*c_1(\mathcal{O}_{P(E)}(1))^{r-1+j}$ of the powers of the first Chern class of $\mathcal{O}_{P(E)}(1)$. The total Segre class $s(E)$ of $E$ is defined by
\begin{align}\label{2.11}
s(E)=1+s_1(E)+\cdots+s_n(E).
\end{align}
A well-known relation between Chern classes and Segre classes of $E$ is
\begin{align}\label{2.12}
c(E)=s(E)^{-1},
\end{align}
which can also be viewed as a definition of Chern classes. Motivated by this, people define the following Segre forms $s_j(E,h)\in\mathcal{A}^{(j,j)}(M)$ of $E$ by using an Hermitian metric $h$ on $E$ (cf. \cite{Diverio}, \cite{Guler}, \cite{Mour}):
\begin{align}\label{2.13}
s_j(E,h)=\int_{P(E)/M}c_1(\mathcal{O}_{P(E)}(1),h)^{r-1+j},\quad s(E,h)=\sum^n_{j=0}s_j(E,h).
\end{align}
Moreover, at the differential form level, it holds that (cf. \cite{Diverio}, \cite{Guler}, \cite{Mour}),
\begin{align}\label{2.14}
c(E,h)=s(E,h)^{-1}.
\end{align}
For simplicity, we also denote the integral $\int_{P(E)/M}\phi$ of a differential form $\phi$ on $P(E)$ along fibres simply by $\pi_*\phi$.

In Finsler case, we define the following Segre forms $s_j(E,G)$ and another total Chern form $\mathcal{C}(E,G)$ of $(E,G)$:
\begin{align}\label{2.15}
s_j(E,G):=\pi_*\Xi^{r-1+j},\quad s(E,G)=\sum^n_{j=0}s_j(E,G),\quad \mathcal{C}(E,G)=s(E,G)^{-1}.
\end{align}
One see easily
\begin{align}\label{2.16}
\begin{split}
&\mathcal{C}_1(E,G)=-s_1(E,G),\quad \mathcal{C}_2(E,G)=s_1(E,G)^2-s_2(E,G),\\
&\mathcal{C}_3(E,G)=-s_3(E,G)^3+2s_1(E,G)s_2(E,G)-s_3(E,G),\ etc.
\end{split}
\end{align}

\begin{lemma}\label{l2.3} For each $k$, $0\leq k\leq r$, the Chern form $\mathcal{C}_k(E,G)$ is a closed $(k,k)$-form on $M$ and represents the $k^{th}$ Chern form $c_k(E)$ of $E$.
\end{lemma}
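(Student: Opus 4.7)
The plan is to reduce everything to properties of the Segre forms $s_j(E,G)=\pi_*\Xi^{r-1+j}$ and then use that $\mathcal{C}(E,G)=s(E,G)^{-1}$ is nothing more than a polynomial expression in those Segre forms. First I would verify that each $s_j(E,G)$ is a closed $(j,j)$-form on $M$ representing the Segre class $s_j(E)$. Closedness follows because $\Xi$ is a Chern form, so $d\Xi^{r-1+j}=0$ on $P(E)$, and fibre integration $\pi_*$ commutes with $d$. The bidegree $(j,j)$ is forced by the fact that the fibres of $\pi\colon P(E)\to M$ are $(r-1)$-dimensional complex submanifolds, so that $\pi_*$ drops bidegree by exactly $(r-1,r-1)$. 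Finally, since $\Xi=c_1(\mathcal{O}_{P(E)}(1),(h^G)^{-1})$ represents $c_1(\mathcal{O}_{P(E)}(1))$, fibre integration of $\Xi^{r-1+j}$ represents $\pi_* c_1(\mathcal{O}_{P(E)}(1))^{r-1+j}=s_j(E)$ by the very definition of the Segre classes recalled above.

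Next I would observe that Lemma~\ref{l.2.1} gives $s_0(E,G)=\pi_*\Xi^{r-1}=1$, so that
\begin{equation*}
s(E,G)=1+\sum_{j\geq 1}s_j(E,G)
\end{equation*}
is a unit in the (graded-commutative) algebra $\bigoplus_{k}\mathcal{A}^{(k,k)}(M)$. Its formal inverse
\begin{equation*}
\mathcal{C}(E,G)=s(E,G)^{-1}=\sum_{m\geq 0}\bigl(-\textstyle\sum_{j\geq 1}s_j(E,G)\bigr)^{m}
\end{equation*}
is actually a finite sum, because the positive-degree part of $s(E,G)$ is nilpotent on the $n$-dimensional base $M$. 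Extracting the $(k,k)$-component then expresses $\mathcal{C}_k(E,G)$ as an explicit polynomial in $s_1(E,G),\ldots,s_k(E,G)$, which reproduces the formulas in~(\ref{2.16}).

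From this description the two claims of the lemma follow directly. Since each $s_j(E,G)$ is closed of bidegree $(j,j)$, any polynomial in them is closed, and the total bidegree of the degree-$k$ component is $(k,k)$; hence $\mathcal{C}_k(E,G)\in\mathcal{A}^{(k,k)}(M)$ is $d$-closed. Passing to de Rham cohomology, the first step gives $[s(E,G)]=s(E)$ in $H^{*}(M,\R)$, so by (\ref{2.12}),
\begin{equation*}
[\mathcal{C}(E,G)]=[s(E,G)]^{-1}=s(E)^{-1}=c(E),
\end{equation*}
and comparing degree-$k$ parts yields $[\mathcal{C}_k(E,G)]=c_k(E)$ for $0\leq k\leq r$.

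There is essentially no hard step: the proof is a formal consequence of the projection formula, the closedness of $\Xi$, and the combinatorial identity $c=s^{-1}$. The only point that deserves attention is the invertibility of $s(E,G)$ at the form level, which is guaranteed by $s_0(E,G)=1$ (Lemma~\ref{l.2.1}) together with nilpotency of higher-degree forms on the finite-dimensional manifold $M$, making the geometric series defining $\mathcal{C}(E,G)$ a polynomial and ensuring that all manipulations above remain valid before passing to cohomology.
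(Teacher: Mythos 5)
Your proof is correct, but it takes a genuinely different route from the paper. You work entirely at the level of the Segre forms: closedness and bidegree of $s_j(E,G)=\pi_*\Xi^{r-1+j}$ come from $d\Xi=0$ and the fact that fibre integration along the $(r-1)$-dimensional fibres commutes with $d$ and drops bidegree by $(r-1,r-1)$; the class $[s_j(E,G)]=s_j(E)$ comes from the topological definition of Segre classes as $\pi_*c_1(\mathcal{O}_{P(E)}(1))^{r-1+j}$; and then $\mathcal{C}_k(E,G)$ is handled as a polynomial in the $s_j(E,G)$ (invertibility of $s(E,G)$ being guaranteed by $s_0(E,G)=1$ from Lemma \ref{l.2.1} and nilpotency of positive-degree forms), with the identification $[\mathcal{C}(E,G)]=s(E)^{-1}=c(E)$ obtained from (\ref{2.12}) purely in cohomology. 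The paper instead fixes an auxiliary Hermitian metric $h$, writes $\Xi-c_1(\mathcal{O}_{P(E)}(1),h^{-1})=\partial\bar\partial\varphi$, pushes this down to get explicit $\partial\bar\partial$-transgressions $s_j(E,G)=s_j(E,h)+\partial\bar\partial\eta_j$, and then uses the known Hermitian form-level identity $c(E,h)=s(E,h)^{-1}$ (\ref{2.14}) together with Lemma \ref{l2.2} to conclude $\mathcal{C}(E,G)=c(E,G)+\partial\bar\partial(\tilde\eta-\tilde\xi)$. What the paper's longer route buys is precisely these explicit Bott--Chern transgression terms, which are not a by-product of your argument but are used afterwards to define $\tilde c(E,G;h)$ in (\ref{2.c1}) and the example (\ref{2.22}); what your route buys is economy and independence of any auxiliary Hermitian metric and of the Hermitian-case result (\ref{2.14}), at the cost of establishing only equality of de Rham classes rather than an explicit $\partial\bar\partial$-exact difference. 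For the lemma as stated, your argument is complete.
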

\begin{proof} Giving any Hermitian metric $h$ on $E$, we compute explicitly the transgression term of the Chern form $\mathcal{C}(E,G)$ and
$\pi^*c(E,h)$. Set
\begin{align}\label{2.17}
\varphi=\frac{\sqrt{-1}}{2\pi}\log{{h^G}\over h}.
\end{align}
Clearly, $\varphi$ is a smooth function on $P(E)$. An easy computation shows
\begin{align}\label{2.18}
\Xi-c_1(\mathcal{O}_{P(E)}(1),h^{-1})=\partial\bar{\partial}\varphi.
\end{align}
So
\begin{align}\label{2.19}
s_j(E,G)=&\pi_*\Xi^{r-1+j}=\pi_*\left(c_1(\mathcal{O}_{P(E)}(1),h^{-1})+\partial\bar{\partial}\varphi\right)^{r-1+j}
=s_j(E,h)+\partial\bar{\partial}\eta_j,
\end{align}
where
\begin{align}\label{2.20}
\eta_j=\pi_*\left(\varphi \sum_{i=1}^{r-1+j}\binom{r-1+j}{i}c_1(\mathcal{O}_{P(E)}(1),h^{-1})^{r-1+j-i}(\partial\bar{\partial}\varphi)^{i-1}\right).
\end{align}
Therefore, the total Chern form $\mathcal{C}(E,G)$ is
\begin{align*}
\mathcal{C}(E,G)&=\left(\sum_{j=0}^ns_j(E,G)\right)^{-1}=\left(1+\sum_{j=1}^n(s_j(E,h)+\partial\bar{\partial}\eta_j)\right)^{-1}\\
&=1+\sum_{i=1}^n(-1)^i\left(\sum_{j=1}^ns_j(E,h)+\partial\bar{\partial}\eta\right)^i\\
&=1+\sum_{i=1}^n(-1)^i\left(\sum_{j=1}^n(s_j(E,h)\right)^i+\partial\bar{\partial}\tilde\eta\\
&=s(E,h)^{-1}+\partial\bar{\partial}\tilde\eta=c(E,h)+\partial\bar{\partial}\tilde\eta\\
&=c(E,G)+\partial\bar{\partial}(\tilde\eta-\tilde\xi),
\end{align*}
where the last two equalities come from (\ref{2.14}) and (\ref{2.10}), and
\begin{align}\label{2.21}
&\eta=\sum_{i=1}^n\eta_j,\
\tilde\eta=\eta\sum^n_{i=1}\sum_{l=1}^i(-1)^i\binom{i}{l}\left(\sum_{j=1}^ns_j(E,h)\right)^{i-l}(\partial\bar{\partial}\eta)^{l-1}.
\end{align}
Now the lemma follows from Lemma \ref{l2.2} and the equality $c(E,G)=\mathcal{C}(E,G)+\partial\bar{\partial}(\tilde\eta-\tilde\xi)$.
\end{proof}
As a consequence, by a well-known fact in complex geometry (cf. \cite{Bo-C}), we can express the Euler characteristic $\chi(M)$ of $M$ by these two kinds of Chern forms.
\begin{cor} Let $M$ be a closed complex manifold of dimension $n$. Let $G$ be a strongly pseudo-convex complex Finsler metric on the holomorphic tangent vector bundle $TM$ of $M$. Then
$$\chi(M)=\int_Mc_n(TM,G)=\int_M\mathcal{C}_n(TM, G).$$
\end{cor}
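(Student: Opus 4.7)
The plan is to derive this corollary as an immediate consequence of Lemma \ref{l2.2}, Lemma \ref{l2.3}, and the classical Chern--Gauss--Bonnet theorem for complex manifolds. Since the statement only concerns cohomological identifications of the top-degree Chern forms, no new computation on the total space $P(TM)$ is needed; one simply has to track what the two lemmas already give.

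First I would specialize $E = TM$ and $k = n$ in both Lemma \ref{l2.2} and Lemma \ref{l2.3}. Lemma \ref{l2.2} shows that $c_n(TM,G)$ is a closed $(n,n)$-form on $M$ cohomologous to $c_n(TM,h)$ for any auxiliary Hermitian metric $h$ on $TM$; in particular, it represents the top Chern class $c_n(TM) \in H^{2n}(M,\mathbb{Z})$. Likewise, Lemma \ref{l2.3} gives $\mathcal{C}_n(TM,G) = c_n(TM,G) + \partial\bar\partial(\tilde\eta - \tilde\xi)$ as a global identity of $(n,n)$-forms on $M$, so $\mathcal{C}_n(TM,G)$ also represents $c_n(TM)$.

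Next I would integrate both forms over the closed complex manifold $M$. Since $\partial\bar\partial$-exact $(n,n)$-forms on a closed complex manifold have zero integral (by Stokes, as $\partial\bar\partial(\tilde\eta - \tilde\xi) = d(\bar\partial(\tilde\eta - \tilde\xi))$), the integrals $\int_M c_n(TM,G)$ and $\int_M \mathcal{C}_n(TM,G)$ coincide and both compute the pairing $\langle c_n(TM),[M]\rangle$. By the classical Chern--Gauss--Bonnet theorem (see, e.g., \cite{Bo-C}) this pairing equals the Euler characteristic $\chi(M)$, giving
\[
\chi(M) = \int_M c_n(TM,G) = \int_M \mathcal{C}_n(TM,G),
\]
as claimed.

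There is essentially no obstacle: the only potentially delicate point is verifying that the Bott--Chern transgression term $\tilde\eta - \tilde\xi$ produced in Lemma \ref{l2.3} is a globally defined form on $M$ (not merely a local expression), but this has already been established in the proofs of Lemmas \ref{l2.2} and \ref{l2.3}, where both $\tilde\xi$ and $\tilde\eta$ arise from fibre integrals of globally defined forms on $P(TM)$. Hence the corollary is a direct corollary of the two preceding lemmas combined with the standard Gauss--Bonnet identity, and no additional input is required.
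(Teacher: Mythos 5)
Your proposal is correct and matches the paper's (implicit) argument: the corollary is stated there precisely as a consequence of Lemmas \ref{l2.2} and \ref{l2.3} together with the classical Gauss--Bonnet--Chern fact from \cite{Bo-C} that $\int_M c_n(TM)=\chi(M)$, with the $\partial\bar\partial$-exact discrepancies integrating to zero on the closed manifold $M$.
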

\begin{rem} In some sense, the above corollary can be viewed as a Gauss-Bonnet-Chern formula in the complex Finsler geometry setting. On the other hand, we can do all things in this section similarly for complex Finsler vector bundles over a closed smooth manifold. As a result, we can also deduce a Gauss-Bonnet-Chern-type formula for an almost complex manifold with a strongly pseudo-convex complex Finsler metric on the tangent vector bundle $TM$ of $M$, here $TM$ is viewed as a complex vector bundle. The defect in it is that the metric-preserving connection is not unique as in the holomorphic case.
\end{rem}
We denote the form $\tilde\xi-\tilde\eta$ by ${\tilde c}(E,G;h)$, which is a Bott-Chern transgression term of $c(E,G)$ and $\mathcal{C}(E,G)$.
Let ${\tilde c}_k(E,G;h)$ denote the $(k,k)$-part in ${\tilde c}(E,G;h)$. Clearly, one has
\begin{align}\label{2.c1}
c_k(E,G)-\mathcal{C}_k(E,G)=\partial\bar{\partial}{\tilde c}_{k-1}(E,G;h),\quad {\tilde c}(E,G;h)=\sum^{r-1}_{k=0}{\tilde c}_k(E,G;h).
\end{align}
\begin{ex} As an example, we write down the following Bott-Chern transgression term ${\tilde c}_0(E,G;h)$ by using formulas (\ref{2.8}), (\ref{2.9}) and ({\ref{2.21}}):
 \begin{align}\label{2.22}
 \begin{split}
 {\tilde c}_0(E,G;h)&=\frac{\sqrt{-1}}{2\pi}\int_{P(E)/M}
\left(\log\frac{G}{h}\sum_{i=0}^{r-1}(\frac{\sqrt{-1}}{2\pi}\partial\bar{\partial}\log G)^{i}(\frac{\sqrt{-1}}{2\pi}\partial\bar{\partial}\log h)^{r-1-i}\right.\\
& \left.-\log\frac{\det(G_{i\bar{j}})}{\det(h_{i\bar{j}})}(\frac{\sqrt{-1}}{2\pi}\partial\bar{\partial}\log G)^{r-1}\right).
 \end{split}
 \end{align}
\end{ex}

\begin{rem}\label{r2.1}
Note that $\tilde\xi$, $\tilde\eta$ depend on the choice of the Hermitian metric $h$ on $E$ and it looks that the Bott-Chern form ${\tilde c}(E,G;h)$ also depends on $h$. We do not know whether it holds that $c(E,G)=\mathcal{C}(E,G)$ as the Hermitian case (\ref{2.14}). However, by averaging the induced Hermitian metric $h^G$ alone the fibres of $P(E)$, one gets a Hermitian metric $h(G)$ on $E$; then by using this metric, one gets a Bott-Chern transgression term ${\tilde c}(E,G;h(G))$, which only depends on $G$ itself.
\end{rem}

In the following, we will prove the positivity of the signed Segre forms $(-1)^ks_k(E,G)$ for a Finsler metric $G$ on $E$ with positive Kobayashi
curvature.

We recall firstly that a smooth $(p,p)$-form $\phi$ on a complex manifold $N$ is positive if for any $x\in N$ and any linearly independent $(1,0)$ type tangent vectors $v_1,v_2,\cdots, v_p$ at $x$, it holds that
\begin{align}\label{2.111}
(-\sqrt{-1})^{p^2}\phi(v_1,v_2,\cdots, v_p,\bar v_1,\bar v_2,\cdots, \bar v_p)>0.
\end{align}

\begin{thm}\label{t6.2} If $E$ admits a Finsler metric $G$ of the positive (resp. negative) Kobayashi curvature, then the signed Segre form $(-1)^{k}s_{k}(E,G)$ (resp. the Segre forms $s_{k}(E,G)$) are positive $(k,k)$-forms for $0\leq k\leq n$.
\end{thm}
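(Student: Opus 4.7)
The plan is to exploit the decomposition $\Xi = -\tfrac{1}{2\pi}\Psi + \omega_{FS}$ from Lemma~\ref{l.2}, which splits $\Xi$ into a purely horizontal piece and a purely vertical one with respect to the h-v bigrading $T^*P(E)=\tilde{\mathcal{H}}^{*}\oplus\tilde{\mathcal{V}}^{*}$. By the expression (\ref{1.8}) one has that $\Psi$ lives in $\tilde{\mathcal{H}}^{*}\wedge\overline{\tilde{\mathcal{H}}^{*}}$, while by (\ref{1.6119}) $\omega_{FS}$ lives in $\tilde{\mathcal{V}}^{*}\wedge\overline{\tilde{\mathcal{V}}^{*}}$. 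Using Definition~\ref{d1.1}, the assumption on the Kobayashi curvature gives that $-\tfrac{1}{2\pi}\Psi$ is a positive (resp.\ negative) horizontal $(1,1)$-form in the sense of (\ref{2.111}) when $G$ is of negative (resp.\ positive) Kobayashi curvature, while $\omega_{FS}$ is always a positive vertical $(1,1)$-form since it restricts to a K\"ahler form on each fibre $P(E_{z})$.

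I would then expand
\begin{equation*}
\Xi^{r-1+k} \;=\; \sum_{i=0}^{r-1+k}\binom{r-1+k}{i}\left(-\tfrac{1}{2\pi}\Psi\right)^{i}\wedge\omega_{FS}^{r-1+k-i}
\end{equation*}
and argue by bidegree bookkeeping that only one term survives the pushforward $\pi_{*}$. The $i$-th summand has horizontal bidegree $(i,i)$ and vertical bidegree $(r-1+k-i,r-1+k-i)$. Since the holomorphic vertical dimension of $P(E)\to M$ is $r-1$, the purely vertical form $\omega_{FS}^{j}$ vanishes identically for $j>r-1$, killing every term with $i<k$. Terms with $i>k$ have vertical bidegree strictly below $(r-1,r-1)$, hence their fibre-integration is zero (not top degree on the fibre). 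Only $i=k$ contributes, giving
\begin{equation*}
s_k(E,G) \;=\; \pi_{*}\Xi^{r-1+k} \;=\; \binom{r-1+k}{k}\,\pi_{*}\!\left(\left(-\tfrac{1}{2\pi}\Psi\right)^{k}\wedge\omega_{FS}^{r-1}\right).
\end{equation*}

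To finish, in the negative Kobayashi case $-\tfrac{1}{2\pi}\Psi$ is a strongly positive horizontal $(1,1)$-form (diagonalising the positive-definite Hermitian matrix $K_{i\bar j\alpha\bar\beta}v^{i}\bar v^{j}/G$ as a sum of rank-one forms), so its $k$-th exterior power wedged with the strongly positive vertical form $\omega_{FS}^{r-1}$ is strongly positive on $P(E)$; the fibrewise integration along $\pi:P(E)\to M$ then delivers a positive $(k,k)$-form on $M$, which is $s_{k}(E,G)$. In the positive Kobayashi case one writes $\left(-\tfrac{1}{2\pi}\Psi\right)^{k}=(-1)^{k}\left(\tfrac{1}{2\pi}\Psi\right)^{k}$ with $\tfrac{1}{2\pi}\Psi$ now strongly positive on horizontal directions, and the same argument applied to $(-1)^{k}s_{k}(E,G)$ yields its positivity.

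The main obstacle will be the careful vertical/horizontal bidegree accounting in the second step, especially the vanishing statements for $i\ne k$; once that is in hand, the positivity assertions reduce to the standard facts that (i) wedges and non-negative combinations of strongly positive forms are strongly positive, and (ii) fibrewise integration of a strongly positive form along a proper holomorphic submersion is positive. A subtle point to handle carefully is that the coefficients of $\Psi$ depend nontrivially on the fibre variable $v$, so one cannot pull $\Psi^{k}$ out of $\pi_{*}$; instead, one must verify strong positivity of $\left(\tfrac{1}{2\pi}\Psi\right)^{k}\wedge\omega_{FS}^{r-1}$ pointwise on $P(E)$ and then integrate against the fibrewise volume form induced by $\omega_{FS}^{r-1}$.
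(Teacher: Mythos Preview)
Your proposal is correct and follows essentially the same route as the paper: both derive the key identity
\[
(-1)^{k}s_{k}(E,G)=\frac{1}{(2\pi)^{k}}\binom{r-1+k}{k}\int_{P(E)/M}\Psi^{k}\wedge\omega_{FS}^{r-1}
\]
from the h-v splitting $\Xi=-\tfrac{1}{2\pi}\Psi+\omega_{FS}$ and bidegree counting, and then argue positivity from the sign of $\Psi$. The only stylistic difference is that where you invoke general facts about strongly positive forms and their pushforwards, the paper carries out the final step concretely: it diagonalises $\Psi$ at each point of $P(E)$ and evaluates the integrand on horizontal lifts $Y_{1}^{h},\dots,Y_{k}^{h}$ of given tangent vectors $Y_{1},\dots,Y_{k}\in T_{z_{0}}M$, showing the fibrewise integrand is a strictly positive function times the Fubini--Study volume form. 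This explicit computation is exactly the ``subtle point'' you flag at the end, so your plan already anticipates it.
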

\begin{proof} We only prove the lemma for the positive Kobayashi curvature case, the proof of the other case is the same. In this case, for any $(z_0,[v_0]\in P(E)$, there exists a basis $\{\psi^1,\cdots,\psi^n\}$ for ${\tilde{\mathcal{H}}}^*|_{(z_0,[v_0])}$ such that
\begin{align}\label{2.112}
\Psi={\sqrt{-1}}\sum^n_{\alpha=1}\psi^\alpha\wedge\bar\psi^\alpha,
\end{align}
and so for $k\geq 1$,
\begin{align}\label{2.113}
(-{\sqrt{-1}})^{k^2}\Psi^k=k!\sum_{1\leq\alpha_1<\cdots<\alpha_k\leq n}\psi^{\alpha_1}\wedge\cdots\wedge\psi^{\alpha_k}\wedge\bar\psi^{\alpha_1}\wedge\cdots\wedge\bar\psi^{\alpha_k}.
\end{align}
Hence for any independent vectors $X_1,\cdots,X_k\in(q_*\mathcal{H})^*|_{(z_0,[v_0])}$, one has
\begin{align}\label{2.114}
\begin{split}
&(-{\sqrt{-1}})^{k^2}\Psi^k(X_1,\cdots,X_k,\bar X_1,\cdots,X_k)\\
&=k!\sum_{1\leq\alpha_1<\cdots<\alpha_k\leq n}\psi^{\alpha_1}\wedge\cdots\wedge\psi^{\alpha_k}\wedge\bar\psi^{\alpha_1}\wedge\cdots\wedge\bar\psi^{\alpha_k}
(X_1,\cdots,X_k,\bar X_1,\cdots,X_k)\\
&=k!\sum_{1\leq\alpha_1<\cdots<\alpha_k\leq n}|\psi^{\alpha_1}\wedge\cdots\wedge\psi^{\alpha_k}(X_1,\cdots,X_k)|^2> 0.
\end{split}
\end{align}
On the other hand, from (\ref{2.15}) and (\ref{1.11}), one gets
\begin{align}\label{2.115}
(-1)^{k}s_{k}(E,G)=\frac{1}{(2\pi)^{k}}\binom{r-1+k}{k}\int_{P(E)/M}\Psi^{k}\omega^{r-1}_{FS}.
\end{align}
For any $z_0\in M$ and any linearly independent $(1,0)$-type tangent vectors $Y_1,\cdots,Y_{k}$ in $T_{z_0}M$, one has
\begin{align*}
&(-\sqrt{-1})^{k^2}(-1)^{k}s_{k}(E,G)(Y_1,\cdots,Y_{k},\bar Y_1,\cdots,\bar Y_k)\\
&=\frac{1}{(2\pi)^{k}}\binom{r-1+k}{k}\int_{P(E_{z_0})}
(-\sqrt{-1})^{k^2}\Psi^{k}(Y^h_1,\cdots,Y^h_{k},\bar Y^h_1,\cdots,\bar Y^h_{k})i_{z_0}^*\omega^{r-1}_{FS},
\end{align*}
where $Y^h$ denote the horizontal lifting along the fibre $P(E_{z_0})$ of a vector $Y\in T_{z_0}M$, and $i_{z_0}:P(E_{z_0})\hookrightarrow P(E)$
is the inclusion map. Now the theorem follows from that the smooth function
$$(-\sqrt{-1})^{k^2}\Psi^k(Y^h_1,\cdots,Y^h_k,\bar Y^h_1,\cdots,\bar Y^h_k)$$
on $P(E_{z_0})$ is positive everywhere.
\end{proof}

\begin{cor}\label{6.3}
If $E$ is ample, then the signed Segre classes $(-1)^{k}s_{k}(E)$ of $E$ can be represented by a positive $(k,k)$ form. In particular, $c_1(E)$
can be represented by a positive $(1,1)$ form.
\end{cor}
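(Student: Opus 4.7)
The plan is to combine Kobayashi's ampleness characterization (Theorem \ref{t.1}) with Theorem \ref{t6.2} applied to the dual bundle, together with the standard duality relation between Segre classes of $E$ and $E^*$. First, since $E$ is ample, Theorem \ref{t.1} produces a strongly pseudo-convex Finsler metric $G$ on the dual bundle $E^*$ whose Kobayashi curvature is negative. Applying the ``resp.'' part of Theorem \ref{t6.2} to $(E^*, G)$, the Segre forms $s_k(E^*, G)$ are positive $(k,k)$-forms on $M$ for each $0 \leq k \leq n$.

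Next, I need to convert these positive forms on the dual side into representatives for the signed Segre classes of $E$. At the cohomology level we have $c_k(E^*) = (-1)^k c_k(E)$, and feeding this into the defining relation $s(E^*) \cdot c(E^*) = 1 = s(E) \cdot c(E)$ and comparing formal generating functions $c(E^*)(t) = c(E)(-t)$ gives the classical identity $s_k(E^*) = (-1)^k s_k(E)$ in $H^{2k}(M, \mathbb{Z})$. Since by Lemma \ref{l2.3} (and the definition (\ref{2.15})) the form $s_k(E^*, G)$ represents the Segre class $s_k(E^*)$, it therefore represents $(-1)^k s_k(E)$ as well. This gives the first assertion.

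For the statement about $c_1(E)$, I would invoke the Chern--Segre identity $c(E) = s(E)^{-1}$ at the level of total classes, whose degree-$2$ part yields $c_1(E) = -s_1(E) = (-1)^1 s_1(E)$. By the previous paragraph, $(-1)^1 s_1(E)$ is represented by the positive $(1,1)$-form $s_1(E^*, G)$, so $c_1(E)$ admits a positive representative.

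The only mildly delicate point is the Segre duality $s_k(E^*) = (-1)^k s_k(E)$; this is purely formal and follows from the generating-function manipulation above, so there is essentially no obstacle. Everything else is a direct application of results already established in the paper.
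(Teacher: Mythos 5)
Your proposal is correct and follows essentially the same route as the paper: use Kobayashi's characterization to obtain a Finsler metric of negative Kobayashi curvature on $E^*$, apply Theorem \ref{t6.2} to get positivity of $s_k(E^*,G)$, and conclude via the duality $s_k(E)=(-1)^k s_k(E^*)$ together with $c_1(E)=-s_1(E)$. The only difference is that you spell out the generating-function derivation of the Segre duality, which the paper simply quotes as a known fact.
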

\begin{proof} If $E$ ample, then $E^*$ admits a Finsler metric $G$ of the negative Kobayashi curvature. By Theorem \ref{t6.2}, $s_{k}(E^{*})$ can be represented by the positive $(k,k)$-form $s_k(E^*,G)$. Since $s_k(E)=(-1)^{k}s_k(E^{*})$, the signed Segre classes $(-1)^{k}s_{k}(E)$ can be represented by the positive $(k,k)$-form $s_k(E^*,G)$. Note that $c_1(E)=-s_1(E)$, so it can be represented by the positive $(1,1)$-form on $M$.
\end{proof}

\section{The semi-stability of a Finsler-Einstein vector bundle}

In this section, we discuss some properties of Finsler-Einsler vector bundles in the sense of Kobayashi.  By using
the first Chern form $\mathcal{C}_1(E,G)$ given in the last section, we will show, under an extra assumption, that a holomorphic vector bundle with a Finsler-Einsler metric $G$ is semi-stable. Moreover, we prove a Kobayashi-L\"{u}bke type inequality related to the Chern forms $\mathcal{C}_1(E,G)$ and $\mathcal{C}_2(E,G)$ of a Finsler-Einstein vector bundle $(E,G)$, which generalizes a recent result of Diverio (\cite{Diverio} in Hermitian case.

We first recall Kobayashi's definition of a Finsler-Einstein metric.
\begin{defn}\label{4.19}(cf. \cite{Ko2}) Let $(M,\omega)$ be a Hermitian manifold with the K\"{a}hler form $\omega={\sqrt{-1}}g_{\alpha\bar\beta}dz^\alpha\wedge d\bar z^\beta$. Let $E$ be a holomorphic vector bundle over $M$. A strongly pseudo-convex Finsler metric $G$ on $E$ is said to be Finsler-Einstein if there exists some constant $\lambda$ such that
\begin{align}\label{4.10}
tr_{\omega}\Psi:=g^{\alpha\bar\beta}K_{i\bar j \alpha\bar\beta}{{v^i\bar v^j}\over G}=\lambda,
  \end{align}
where $\Psi$ is the Kobayashi curvature of $G$.
\end{defn}
Recall that locally, $\Psi$ is written as (cf. (\ref{1.8}))
$$\Psi= {\sqrt{-1}}K_{i\bar j \alpha\bar\beta}{{v^i\bar v^j}\over G}dz^\alpha\wedge d\bar z^\beta.$$
\begin{rem}
  If the Finsler metric $G$ comes from an Hermitian metric, then $K_{i\b{j}\alpha\b{\beta}}$ is independent of the fiber and (\ref{4.10}) gives
\begin{align}\label{4.a11}
g^{\alpha\b{\beta}}K_{i\b{j}\alpha\b{\beta}}v^i\b{v}^j=\lambda G.
\end{align}
Differentiating both sides of (\ref{4.a11}) with respect to $v^i$, we have
\begin{align}\label{4.a12}
g^{\alpha\b{\beta}}K_{i\b{j}\alpha\b{\beta}}=\lambda G_{\alpha\b{\beta}}.
\end{align}
So
\begin{align}\label{4.3113}
g^{\alpha\b{\beta}}K^i_{j\alpha\b{\beta}}=\lambda\delta^i_j,
\end{align}
i.e., $G$ is an Hermitian-Einstein metric.
\end{rem}
Now we assume that $M$ is a K\"{a}hler manifold with a closed K\"{a}hler form $\omega$. By Lemma \ref{l.2} and (\ref{2.15}), (\ref{2.16}),
one has
\begin{align}\label{4.11}
\mathcal{C}_{1}(E,G)=\frac{r}{2\pi}\int_{P(E)/M}\Psi\w\omega^{r-1}_{FS}.
\end{align}
Thus by (\ref{4.10}),
\begin{align}\label{4.b11}
\mathcal{C}_{1}(E,G)\w \omega^{n-1}=\frac{r}{2\pi n}\left(\int_{P(E)/M}tr_{\omega}(\Psi)\omega^{r-1}_{FS}\right)\w\omega^{n}=\frac{\lambda r}{2\pi n}\w\omega^{n}.
\end{align}
So
\begin{align}\label{4.b11}
\lambda=2\pi\frac{n}{\int_{M}\omega^{n}}\frac{\int_{M}\mathcal{C}_{1}(E,G)\w \omega^{n-1}}{r}.
\end{align}
So as in the Hermitian-Einstein case, we also have that $\lambda$ depends only on the K\"{a}hler class $[\omega]$ and the first Chern class $c_{1}(E)$ of $E$.

For a holomorphic vector bundle $E\to (M,\omega)$, the degree $\deg_{\omega}E$ and the slope $\mu(E)$ is defined respectively by
\begin{align}\label{4.b102}
\deg_{\omega}E=\langle c_1(E)\wedge[\omega^{n-1}],[M]\rangle,\quad \mu(E)={{\deg_{\omega}E}\over{\text{rank}(E)}}.
 \end{align}
Given a coherent sheaf $\mathcal{F}$ over $M$, one can get a holomorphic line bundle $\det(\mathcal{F})$ over $M$ in a standard way (cf. \cite{Ko3}, page 162). The first Chern class $c_1(\mathcal{F})$ of $\mathcal{F}$ is defined to be the first Chern class $c_1(\det(\mathcal{F}))$ of $\det(\mathcal{F})$. Then the degree $\deg_{\omega}\mathcal{F}$ and the slope $\mu(\mathcal{F})$ of $\mathcal{F}$ with respect to $\omega$ are defined respectively by
\begin{align}\label{4.b103}
\deg_{\omega}\mathcal{F}=\int_{M}c_{1}(\mathcal{F})\wedge \omega^{n-1},\quad \mu(\mathcal{F}):=\deg_{\omega}\mathcal{F}/\text{rank} \mathcal{F}.
\end{align}

We recall that $E$ is said to be $\omega$-\emph{stable} (resp. $\omega$-\emph{semi-stable}) if, for every coherent subsheaf $\mathcal{F}$ of $\mathcal{O}(E)$, $0<{\rm rank}(\mathcal{F})<{\rm rank}(E)$, one has
\begin{align}\label{4.104}
\mu(\mathcal{F})<\mu(E)\quad (resp.\quad \mu(\mathcal{F})\leq\mu(E)).
\end{align}

\begin{lemma}\label{p3}
  Let $(E,G)\to (M,\omega)$ be a Finsler-Einstein vector bundle over a closed K\"{a}hler manifold $M$ with the K\"{a}hler form $\omega$. For any holomorphic subbundle $E'\subset E$ of $E$ with $r'={\rm rank}(E')\leq r={\rm rank}(E)$, then
  \begin{align}
    \mu(E')\leq \mu(E).
  \end{align}
\end{lemma}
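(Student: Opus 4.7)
The plan is to write $\mu(E)$ and $\mu(E')$ as fiber integrals over $P(E)$ and $P(E')$ of $\text{tr}_\omega$ of the respective Kobayashi curvatures, then derive a pointwise comparison between these traces via a second-fundamental-form argument.

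First I would equip $E'$ with the restricted Finsler metric $G' := G|_{E'}$. The key structural point is that, under the embedding $P(E') \hookrightarrow P(E)$, one has $\mathcal{O}_{P(E)}(-1)|_{P(E')} = \mathcal{O}_{P(E')}(-1)$ with $h^G$ restricting to $h^{G'}$ on this line bundle; more generally, the Hermitian metric $h^G$ on $\pi^*E$, when restricted to the subbundle $\pi^*E'|_{P(E')} \subset \pi^*E|_{P(E')}$, coincides with $h^{G'}$. Applying (\ref{4.11}) to both $(E,G)$ and $(E',G')$ and then pairing with $\omega^{n-1}$ as in the derivation of (\ref{4.b11}), together with the normalization $\int_{P(E_z)}\omega_{FS}^{r-1} = \int_{P(E'_z)}(\omega'_{FS})^{r'-1} = 1$ from Lemma \ref{l.2.1}, I obtain
\begin{align*}
\mu(E) = \frac{\lambda}{2\pi n}\int_M \omega^n, \qquad \mu(E') = \frac{1}{2\pi n}\int_{P(E')}(\text{tr}_\omega \Psi')(\omega'_{FS})^{r'-1}\wedge(\pi')^*\omega^n,
\end{align*}
where $\Psi'$, $\omega'_{FS}$, $\pi'$ denote the objects attached to $(E',G')$, and the Finsler-Einstein identity $\text{tr}_\omega\Psi \equiv \lambda$ has been used for $\mu(E)$.

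The heart of the argument, and the main obstacle, is to show that $\text{tr}_\omega \Psi'(z,[v]) \leq \text{tr}_\omega \Psi(z,[v])$ pointwise on $P(E')$. Since $h^G|_{\pi^*E',P(E')} = h^{G'}$, the standard Griffiths curvature identity for a holomorphic Hermitian subbundle, applied over the base $P(E')$, gives $R^{(\pi')^*E'} = R^{\pi^*E}|_{\pi^*E',P(E')} - \beta^*\wedge\beta$, where $\beta$ is the second fundamental form of $\pi^*E'$ in $\pi^*E|_{P(E')}$. Pairing both sides with the tautological section $P$ and invoking definition (\ref{1.7}), one finds for every $X$ horizontal on $P(E')$
\begin{align*}
(-\sqrt{-1})(\Psi|_{P(E')} - \Psi')(X, \bar X) = \frac{|\beta(X) P|^2}{h^G(P,P)} \geq 0,
\end{align*}
where the numerator is measured in the induced quotient metric. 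Because $\Psi$ is $P(E)$-horizontal, it vanishes on $P(E)$-vertical vectors; in particular the difference between the $P(E')$- and $P(E)$-horizontal lifts of a tangent vector on $M$ is $P(E)$-vertical and contributes nothing to $\Psi$. Reading off coefficients in the basis $\{dz^\alpha \wedge d\bar z^\beta\}$ then shows that the Hermitian matrix $\Psi_{\alpha\bar\beta} - \Psi'_{\alpha\bar\beta}$ is positive semi-definite at each point of $P(E')$; tracing against the positive-definite $g^{\alpha\bar\beta}$ yields the pointwise inequality.

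Combining the pointwise inequality with the Finsler-Einstein condition $\text{tr}_\omega \Psi \equiv \lambda$ completes the proof:
\begin{align*}
\mu(E') \leq \frac{\lambda}{2\pi n}\int_{P(E')}(\omega'_{FS})^{r'-1}\wedge(\pi')^*\omega^n = \frac{\lambda}{2\pi n}\int_M \omega^n = \mu(E).
\end{align*}
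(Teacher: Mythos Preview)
Your proposal is correct and follows essentially the same route as the paper. Both arguments restrict $G$ to $E'$, express $\mu(E)$ and $\mu(E')$ as fiber integrals of $\text{tr}_\omega\Psi$ and $\text{tr}_\omega\Psi'$ via (\ref{4.11}), and then establish the pointwise inequality $\text{tr}_\omega\Psi'\leq\text{tr}_\omega\Psi$ on $P(E')$. The only difference is in how that pointwise inequality is obtained: the paper works in an adapted frame with Kobayashi normal coordinates at a point $p\in P(E')$ and computes the difference $T_{i\bar j\alpha\bar\beta}=K_{i\bar j\alpha\bar\beta}-K'_{i\bar j\alpha\bar\beta}$ explicitly, identifying it (after contraction with $v^i\bar v^j$) as a manifestly nonnegative quadratic expression; you instead invoke the Griffiths curvature-decreasing formula for the holomorphic Hermitian subbundle $(\pi')^*E'\subset\pi^*E|_{P(E')}$ and contract with the tautological section $P$. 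The paper's explicit term $\tfrac{1}{G}g^{\alpha\bar\beta}G^{b\bar a}(G_{i\bar a\alpha}v^i)\overline{(G_{j\bar b\beta}v^j)}$ is exactly your $|\beta(X)P|^2/h^G(P,P)$ written in coordinates, so the two arguments are the same second-fundamental-form computation in abstract versus coordinate form.
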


\begin{proof}
 For any holomorphic subbundle $E'\subset E$ of $E$ with $r'={\rm rank}(E')\leq r={\rm rank}(E)$ and consider the restricted Finsler metric $G|_{E'}$. Locally, for each point $p\in P(E)$, we can choose a local frame $\{s_1,\cdots,s_{r'},\cdots s_r\}$ of $E$ such that $\{s_1,\cdots,s_{r'\}}$ is a local frame for $E'$, and a normal coordinate system near $p\in P(E')\subset P(E)$ such that $G_{i\b{j}\alpha}=0$ at $p$ (cf. \cite{Ko1}, \cite{Ko2}).

We make the following convention for indices:
$$1\leq A,B,\cdots\leq r,\quad 1\leq i,j,k,l\leq r'<a,b,c,d\leq r.$$
Set
\begin{align}\label{4.2225}
T_{i\b{j}\alpha\b{\beta}}=K_{i\b{j}\alpha\b{\beta}}-K'_{i\b{j}\alpha\b{\beta}},
\end{align}
where $K_{i\b{j}\alpha\b{\beta}}$ and $K'_{i\b{j}\alpha\b{\beta}}$ are defined for $G$ and $G|_{E'}$ as in (\ref{1.8}), respectively.
Then
\begin{align}\label{4.2222}
T_{i\b{j}\alpha\b{\beta}}=G_{i\b{a}\alpha}G^{\b{a}b}G_{b\b{j}\b{\beta}}+G_{i\b{a}\alpha}G^{\b{a}k}G_{k\b{j}\b{\beta}}.
+G_{i\b{k}\alpha}G^{\b{k}b}G_{b\b{j}\b{\beta}}.
\end{align}
Moreover, one obtains at $p$,
\begin{align}\label{4.104}  g^{\alpha\b{\beta}}T_{i\b{j}\alpha\b{\beta}}\frac{v^{i}\b{v}^{j}}{G}
=\frac{1}{G}g^{\alpha\b{\beta}}G^{b\b{a}}(G_{i\b{a}\alpha}v^{i})(\o{G_{j\b{b}\beta}v^{j}})\geq 0.
\end{align}
For any $v=v^{i}s_{i}\in E'$, one has
  \begin{align*}
  \begin{split}
    &\frac{{\mathcal{C}_{1}(E',G|_{E'})\wedge\omega^{n-1}}}{r'}
    =\frac{\sqrt{-1}}{2\pi}\int_{P(E')/M}(K'_{i\b{j}\alpha\b{\beta}}\frac{v^{i}\b{v}^{j}}{G}dz^{\alpha}\wedge d\b{z}^{\beta})\omega^{r'-1}_{FS}\wedge\omega^{n-1}\\
    &=\frac{1}{2\pi n}\int_{P(E')/M}(g^{\alpha\b{\beta}}K'_{i\b{j}\alpha\b{\beta}}\frac{v^{i}\b{v}^{j}}{G})\omega^{r'-1}_{FS}\wedge\omega^{n}\\
    &=\frac{1}{2\pi n}\int_{P(E')/M}(g^{\alpha\b{\beta}}K_{i\b{j}\alpha\b{\beta}}\frac{v^{i}\b{v}^{j}}{G})\omega^{r'-1}_{FS}\wedge\omega^{n}
    -\frac{1}{2\pi n}\int_{P(E')/M}(g^{\alpha\b{\beta}}T_{i\b{j}\alpha\b{\beta}}\frac{v^{i}\b{v}^{j}}{G})\omega^{r'-1}_{FS}\wedge\omega^{n}\\
    &=\frac{\lambda}{2\pi n}\omega^{n}-\frac{1}{2\pi n}\int_{P(E')/M}(g^{\alpha\b{\beta}}T_{i\b{j}\alpha\b{\beta}}\frac{v^{i}\b{v}^{j}}{G})\omega^{r'-1}_{FS}\wedge\omega^{n}\\
   &=\frac{\mathcal{C}_{1}(E,G)\wedge\omega^{n-1}}{r}-\frac{1}{2\pi n}\int_{P(E')/M}(g^{\alpha\b{\beta}}T_{i\b{j}\alpha\b{\beta}}\frac{v^{i}\b{v}^{j}}{G})\omega^{r-1}_{FS}\wedge\omega^{n}.
    \end{split}
  \end{align*}
So we have proved that for any holomorphic subbundle $E'$ of $E$,
\begin{align}\label{4.b20}
\mu(E')\leq\mu(E).
\end{align}
\end{proof}
\begin{lemma}\label{4.33}  Let $(E,G)\to (M,\omega)$ be a Finsler-Einstein vector bundle over a closed K\"{a}hler manifold $M$ with the K\"{a}hler form $\omega$ and let $(L, h)$ be a Hermitian-Einstein line bundle with respects to $\omega$. Then
  $(E\otimes L, \tilde{G}=G\otimes h)$ is also a Finsler-Einstein vector bundle with respects to $\omega$.
\end{lemma}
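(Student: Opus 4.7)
The plan is to compute the Kobayashi curvature $\til\Psi$ of $(E\otimes L,\til G)$ directly from (\ref{1.8}) and identify it with $\Psi$ plus the pull-back of the Chern curvature of $(L,h)$; applying $\tr_\omega$ and combining the two Einstein conditions will then finish the proof. First I would fix local holomorphic frames $s=\{s_1,\ldots,s_r\}$ of $E$ and $e_L$ of $L$, so that $\{s_i\otimes e_L\}$ is a local frame of $E\otimes L$ in which a point $v\otimes e_L$ has the same fibre coordinates $v^i$ as in $E$. Setting $h:=h(e_L,e_L)>0$, a direct check using $G(\lambda v)=|\lambda|^2 G(v)$ shows that the natural product metric $\til G(v\otimes e_L)=h(z)\,G(z,v)$ is frame-independent, and since $h>0$ and $G$ is strongly pseudo-convex, so is $\til G$ (the Levi form along fibres is $h(G_{i\bar j})$).

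The main computational step is the identity
\begin{align*}
\til K_{i\bar j\alpha\bar\beta}=hK_{i\bar j\alpha\bar\beta}+\Bigl(\frac{h_\alpha h_{\bar\beta}}{h}-h_{\alpha\bar\beta}\Bigr)G_{i\bar j}.
\end{align*}
To prove it I will differentiate $\til G_{i\bar j}=hG_{i\bar j}$ in $z^\alpha,\bar z^\beta$ and combine with $\til G^{\bar j k}=h^{-1}G^{\bar j k}$ in the second term of (\ref{1.8}); the cross terms $h_\alpha G_{i\bar j\bar\beta}$ and $h_{\bar\beta}G_{i\bar j\alpha}$ cancel between $\til G_{i\bar j\alpha\bar\beta}$ and $\til G^{k\bar l}\til G_{i\bar l\alpha}\til G_{k\bar j\bar\beta}$, which is where I expect the only risk of a slip, and the remaining contribution collapses as stated. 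Since the Chern curvature of $(L,h)$ satisfies $R^L_{\alpha\bar\beta}=-(\log h)_{\alpha\bar\beta}=(h_\alpha h_{\bar\beta}/h-h_{\alpha\bar\beta})/h$, dividing the displayed identity by $\til G=hG$ and contracting with $v^i\bar v^j$ (using $G_{i\bar j}v^i\bar v^j=G$ from Lemma \ref{l.1}) gives
\begin{align*}
\til K_{i\bar j\alpha\bar\beta}\frac{v^i\bar v^j}{\til G}=K_{i\bar j\alpha\bar\beta}\frac{v^i\bar v^j}{G}+R^L_{\alpha\bar\beta},
\end{align*}
i.e.\ $\til\Psi=\Psi+\pi^*\Omega^L$, where $\Omega^L={\sqrt{-1}}R^L_{\alpha\bar\beta}dz^\alpha\wedge d\bar z^\beta$ is the Chern curvature $(1,1)$-form of $(L,h)$.

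Finally, applying $\tr_\omega=g^{\alpha\bar\beta}(\,\cdot\,)_{\alpha\bar\beta}/\sqrt{-1}$ and using the Finsler-Einstein condition $\tr_\omega\Psi=\lambda$ together with the Hermitian-Einstein condition $g^{\alpha\bar\beta}R^L_{\alpha\bar\beta}=\til\lambda$ yields
\begin{align*}
\tr_\omega\til\Psi=\lambda+\til\lambda,
\end{align*}
a constant, which is exactly the Finsler-Einstein condition (\ref{4.10}) for $(E\otimes L,\til G)$. As a sanity check one can reach the same conclusion via the Kobayashi correspondence: $\mathcal{O}_{P(E\otimes L)}(-1)\cong\mathcal{O}_{P(E)}(-1)\otimes\pi^*L$ with $h^{\til G}=h^G\cdot\pi^*h$, so by Lemma \ref{l.2} and the invariance $\omega_{FS}(\til G)=\omega_{FS}(G)$ (which follows from $(\log G)_{i\bar j}v^i=0$), one again obtains $\til\Psi=\Psi+\pi^*\Omega^L$.
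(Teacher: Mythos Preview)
Your proof is correct and reaches the same key identity as the paper, namely $\til\Psi=\Psi+\pi^*\Omega^L$, after which tracing with $\omega$ and combining the two Einstein constants gives $\lambda_{E\otimes L}=\lambda_E+\lambda_L$. The only difference is packaging: the paper argues one level up, noting that $h^{\til G}_{i\bar j}=h\,G_{i\bar j}$ so that the Chern connection form on $\pi^*(E\otimes L)$ splits as $(\partial h^G)(h^G)^{-1}+\partial\log h\cdot I$, whence the curvature identity $R^{\pi^*(E\otimes L)}=R^{\pi^*E}+\bar\partial\partial\log h\cdot I$ and then (\ref{1.7}) give $\til\Psi=\Psi+\pi^*\Omega^L$ immediately; you instead expand $\til K_{i\bar j\alpha\bar\beta}$ from (\ref{1.8}) and watch the cross terms cancel. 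Your ``sanity check'' via $\mathcal{O}_{P(E\otimes L)}(-1)\cong\mathcal{O}_{P(E)}(-1)\otimes\pi^*L$ is in fact the closest in spirit to the paper's argument.
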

\begin{proof} Clearly, $ \tilde{G}=G\otimes h$ is a complex Finsler metric on $E\otimes L$. Note that $P(E\otimes L)$ is biholomorphic equivalent to $P(E)$ naturally. Let $h^G$ and $h^{\tilde G}$ are the induced Hermitian metrics on $\pi^*E$ and $\pi^*(E\otimes L)$, respectively.
Then one has
\begin{align}\label{5.808}
\b\p\left((\p h^{\tilde G})(h^{\tilde G})^{-1}\right)=\b\p\left((\p h^G)(h^G)^{-1}+\p\log h\right)=\b\p((\p h^G)(h^G)^{-1})+\b\p\p\log h.
\end{align}
Since $E$ is Finsler-Einstein and $L$ is Hermitian-Einstein, by (\ref{5.808}) and (\ref{4.10}), (\ref{1.8}),  the proposition follows. Moreover,
If $\lambda_{E\otimes L}$, $\lambda_E$ and $\lambda_L$ are constants determined by the metric $\tilde G$, $G$ and $h$, respectively, then one verifies easily that
\begin{align}\label{5.909}
\lambda_{E\otimes L}=\lambda_E+\lambda_L.
\end{align}
\end{proof}
From the two lemmas above, we can prove the following
\begin{thm}\label{3ppp}
 Let $E$ be a Finsler-Einstein vector bundle over a closed K\"{a}hler manifold $M$ with the K\"{a}hler form $\omega$. If moreover, for each $m$ with $1<m<r$,  $\wedge^mE$ is also a Finsler-Einstein vector bundle over $(M,\omega)$, then $E$ is $\omega$-semi-stable.
\end{thm}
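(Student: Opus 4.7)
The plan is the classical reduction used for proving that a Hermitian-Einstein bundle is semistable: pass from a general coherent subsheaf to a rank-one subsheaf of an exterior power via the determinant, and then apply the rank-one specialisation of Lemma \ref{p3} on the Finsler-Einstein bundles $\wedge^s E$.

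Let $\mathcal{F}\subset\mathcal{O}(E)$ be a coherent subsheaf of rank $s$ with $0<s<r$; I want to prove $\mu(\mathcal{F})\leq\mu(E)$. Form the reflexive determinant $L:=(\wedge^s\mathcal{F})^{**}$, a holomorphic line bundle satisfying $\deg L=\deg\mathcal{F}$. The natural map $\wedge^s\mathcal{F}\to\wedge^s E$ gives, after reflexification, a sheaf injection $L\hookrightarrow\wedge^s E$ onto a rank-one coherent subsheaf; replacing $L$ by its saturation inside $\wedge^s E$ does not change $\deg L$, since the quotient is supported in codimension $\geq 2$. The saturated $L$ is then a holomorphic line subbundle of $\wedge^s E$ over the complement $U=M\setminus S$ of an analytic set $S$ of codimension $\geq 2$. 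By hypothesis $\wedge^s E$ is Finsler-Einstein (for $s=1$ this is the hypothesis on $E$ itself), and combining $c_1(\wedge^s E)=\binom{r-1}{s-1}c_1(E)$ with (\ref{4.b11}) gives $\mu(\wedge^s E)=s\mu(E)$. Hence it suffices to prove $\deg L\leq\mu(\wedge^s E)$, which will yield $\mu(\mathcal{F})=\deg L/s\leq\mu(E)$.

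To obtain this inequality I apply the computation in the proof of Lemma \ref{p3} to $(\wedge^s E,G_{\wedge^s E})$ and the line subbundle $L|_U\subset(\wedge^s E)|_U$. In the rank-one case $r'=1$ the projectivised fibre $P(L_z)$ is a single point, so the fibrewise integral $\int_{P(L)/M}$ reduces to pointwise evaluation, and the lemma yields on $U$ the pointwise bound
\[
\mathcal{C}_1(L,G_{\wedge^s E}|_L)\wedge\omega^{n-1}\ \leq\ \frac{\mu(\wedge^s E)}{\int_M\omega^n}\,\omega^n,
\]
with the deficit equal to the non-negative quantity $\tfrac{1}{2\pi n}g^{\alpha\bar\beta}T_{1\bar 1\alpha\bar\beta}v^1\bar v^1/G\geq 0$ from (\ref{4.104}). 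Integrating this pointwise inequality over $U$ and identifying the integral with $\deg L$ via a removable-singularity argument gives $\deg L\leq\mu(\wedge^s E)=s\mu(E)$, hence $\mu(\mathcal{F})\leq\mu(E)$.

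The main technical obstacle is the globalisation step: one must justify that the integral over $U$ of the smooth Chern form of the restricted (and possibly singular) Finsler metric on the saturated subsheaf $L$ indeed equals $\int_M c_1(L)\wedge\omega^{n-1}=\deg L$. This is the standard codimension-two removable-singularity result for saturated subsheaves of Hermitian holomorphic bundles (\`a la Uhlenbeck-Yau, and Kobayashi \cite{Ko3}, Chapter V); in the Finsler setting one must verify that the restricted Finsler metric has locally $L^1$ Kobayashi curvature across $S$, which is the crux of the proof and is automatic once $L$ is saturated and $\dim_{\mathbb{C}}M\geq 2$.
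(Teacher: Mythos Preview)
Your overall strategy is correct and is the other classical route to semistability, but it differs from the paper's proof. The paper does not invoke Lemma~\ref{p3} on the line subbundle $L\subset\wedge^m E$; instead it dualizes: the inclusion $L\hookrightarrow\wedge^m E$ is viewed as a nontrivial global holomorphic section of $\wedge^m E\otimes(\det\mathcal{F})^*$ (which extends across the codimension-$\geq 2$ singular set by Hartogs), then Lemma~\ref{4.33} is used to equip $\wedge^m E\otimes(\det\mathcal{F})^*$ with a Finsler--Einstein metric of constant $\lambda_m-\lambda'$, and Kobayashi's vanishing theorem (Theorem~7.1 in \cite{Ko1}) forces $\lambda_m-\lambda'\geq 0$, i.e.\ $\mu(\mathcal{F})\leq\mu(E)$. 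The virtue of the paper's argument is that it completely sidesteps the removable-singularity issue you flag at the end: once the section is extended by Hartogs, every object is globally smooth and no curvature integral over an open set needs to be compared with a topological degree.

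Your approach, by contrast, localises the curvature inequality of Lemma~\ref{p3} to the open set $U$ and then has to identify $\int_U\mathcal{C}_1(L,G|_L)\wedge\omega^{n-1}$ with $\deg_\omega L$. You are right that for rank one the restricted Finsler metric is Hermitian, so the required extension lemma is literally the Hermitian one from \cite{Ko3}, Chapter~V; thus your proof can be completed, but it imports machinery the paper manages to avoid. One small correction: saturating $L$ inside $\wedge^s E$ can \emph{increase} the degree (the quotient $L^{\mathrm{sat}}/L$ may be supported on a divisor), not merely leave it unchanged; this is harmless for your inequality since you only need $\deg L\leq\deg L^{\mathrm{sat}}\leq\mu(\wedge^s E)$, but the sentence as written is inaccurate.
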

\begin{proof}
 For any subsheaf $\mc{F}$ of $\mc{E}=\mc{O}(E)$ of rank $m<r$ such that $\mc{E}/\mc{F}$ is torsion-free, there is a non-trivial holomorphic section of the bundle $\w^m E\otimes(\det\mc{F})^*$ (cf. \cite{Ko3}, page 178). Note that every line bundle admits an Hermitian-Einstein structure, we can choose an Hermitian-Einstein metric on $\det\mc{F}$ with constant
 $$\lambda'=\frac{2n\pi\cdot\mu(\det\mc{F})}{\int_M \omega^n}=\frac{2n m\pi\cdot\mu(\mc{F})}{\int_M \omega^n}.$$
 For $1<m<r$, let $G_m$ denote the Finsler-Einstein metric on $\w^mE\to (M,\omega)$. Then the corresponding constant $\lambda_m$ is given by
  \begin{align*}
    \lambda_m&=\frac{2n\pi}{\int_M\omega^n}\mu(\w^m E)\\
    &=\frac{2n\pi}{\int_M\omega^n}\frac{\int_M c_1(\w^m E)\wedge \omega^{n-1}}{{\rm rank}(\w^m E)}\\
    &=\frac{2n\pi}{\int_M\omega^n}\frac{\int_M \binom{r-1}{m-1}c_1(E)\wedge\omega^{n-1}}{\binom{r}{m}}\\
    &=\frac{2n m\pi}{\int_M\omega^n}\mu(E).
  \end{align*}
  By Lemma \ref{4.33} and (\ref{5.909}), the vector bundle $\w^m E\otimes (\det\mc{F})^*$ is a Finsler-Einstein vector bundle with factor $\lambda_m-\lambda'$. Since the bundle admits a non-trivial section, by Theorem 7.1 in \cite{Ko1}, we have
  $$\lambda_m-\lambda'\geq 0.$$
  This is nothing but the desired inequality $\mu(\mc{F})\leq \mu(\mc{E})$. This proves that $E$ is $\omega$-semi-stable.
\end{proof}
\begin{rem}
How to prove that a Finsler-Einstein vector bundle is semi-stable without the extra assumption seems a difficult problem. One may follow the method of Kobayashi in Chap. V in  \cite{Ko3}. However, this approach meets a difficult extension problem. On the other hand, how to equip $\w^mE$ with Finsler metrics from the one on $E$ turns out to be also a difficult question.
\end{rem}

Let $E$ be a holomorphic vector bundle over closed K\"{a}hler manifold $(M,\omega)$ with an Hermitian-Einstein metric $h$. Then one has the following Kobayashi-L\"{u}bke inequality (cf. \cite{Ko3}, \cite{Lubke}), i.e.
$$\int_{M}\left((r-1)c_{1}(E,h)^{2}-2rc_{2}(E,h)\right)\wedge \omega^{n-2}\leq 0,$$
where $\omega$ is the K\"{a}hler form on $M$, the equality holds if and only if $(E,h)$ is projectively flat (i.e., $R^{i}_{j\alpha\b{\beta}}=(1/r)\delta^{i}_{j}R_{\alpha\b{\beta}}$, (cf. \cite{Ko3}, page 7)).

In \cite{Diverio}, S. Diverio proved an inequality for Segre forms of Hermitian-Einstein vector bundles, from which the Kobayashi-L\"{u}bke inequality follows by considering the Hermtian-Einstein vector bundle $(E\otimes E^{*}, h\otimes \b{h}^{-1})$.
In the Finsler-Einstein case, we have
\begin{thm}\label{t4.9999}  If $(E,G)$ is a Finsler-Einstein vector bundle respect to $\omega$ , then
   \begin{align}\label{4.d11}
     s_{2}(E,G)\w\omega^{n-2}\leq \frac{r(r+1)}{8\pi^{2}n^{2}}\lambda^{2}\omega^n\quad\mbox{\rm on}\ M.
   \end{align}
Moreover, the equality holds if and only if $\Psi=\frac{\lambda}{n}\omega$.
\end{thm}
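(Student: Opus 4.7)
The plan is to express $s_2(E,G)$ explicitly in terms of the Kobayashi curvature $\Psi$ and the Fubini--Study form $\omega_{FS}$, apply a pointwise Cauchy--Schwarz inequality for $(1,1)$-forms, and then integrate along the fibers of $\pi:P(E)\to M$. Starting from the Kobayashi--Aikou identity $\Xi = -\frac{1}{2\pi}\Psi + \omega_{FS}$ of Lemma~\ref{l.2} and the binomial expansion of $\Xi^{r+1}$, I would exploit the fact that $\Psi$ is horizontal while $\omega_{FS}$ is vertical on $P(E)$, together with $\dim_{\mathbb{C}} P(E_z) = r-1$. Terms with $k<2$ contain $\omega_{FS}^{r+1-k}$ which vanishes identically on each fiber (since $\omega_{FS}^r=0$ on $P(E)$), while terms with $k>2$ have vertical degree $r+1-k<r-1$ and so vanish under $\pi_*$. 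Thus only the $k=2$ term survives:
\begin{align*}
s_2(E,G) \,=\, \frac{r(r+1)}{8\pi^2}\,\pi_*\!\left(\Psi^2 \wedge \omega_{FS}^{r-1}\right).
\end{align*}

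Next I would establish the pointwise inequality driving the argument. At each point $(z_0,[v_0])\in P(E)$, the Kobayashi curvature $\Psi_{(z_0,[v_0])}$ is a real Hermitian $(1,1)$-form on $T_{z_0}M$, simultaneously diagonalizable with $\omega_{z_0}$; denote its eigenvalues by $\lambda_1,\ldots,\lambda_n$. A direct computation in such an adapted basis gives
\begin{align*}
\Psi^2 \wedge \omega^{n-2} \,=\, \frac{1}{n(n-1)}\Big[(tr_\omega\Psi)^2 - \textstyle\sum_i \lambda_i^2\Big]\omega^n,
\end{align*}
and the elementary Cauchy--Schwarz bound $\sum_i\lambda_i^2 \geq \frac{1}{n}(\sum_i\lambda_i)^2 = \frac{1}{n}(tr_\omega\Psi)^2$ yields
\begin{align*}
\Psi^2 \wedge \omega^{n-2} \,\leq\, \frac{(tr_\omega\Psi)^2}{n^2}\,\omega^n,
\end{align*}
with equality iff $\Psi$ is a scalar multiple of $\omega$ at that point. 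The Finsler--Einstein hypothesis enters here to replace the \emph{a priori} fiber-dependent quantity $tr_\omega\Psi$ by the constant $\lambda$, producing the uniform estimate $\Psi^2 \wedge \pi^*\omega^{n-2} \leq \frac{\lambda^2}{n^2}\pi^*\omega^n$ on all of $P(E)$.

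Finally, since $\omega_{FS}^{r-1}$ is a positive vertical volume form along each fiber, wedging preserves the inequality and fiber-integration produces
\begin{align*}
s_2(E,G)\wedge\omega^{n-2} \,=\, \frac{r(r+1)}{8\pi^2}\pi_*\!\big(\Psi^2 \wedge \pi^*\omega^{n-2} \wedge \omega_{FS}^{r-1}\big) \,\leq\, \frac{r(r+1)\lambda^2}{8\pi^2 n^2}\big(\pi_*\omega_{FS}^{r-1}\big)\omega^n.
\end{align*}
Since $i_z^*\Psi=0$ on each fiber, one has $i_z^*\omega_{FS}=i_z^*\Xi$ there, and Lemma~\ref{l.2.1} yields $\pi_*\omega_{FS}^{r-1}=1$, closing the inequality. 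For the equality case, the strict positivity of $\omega_{FS}^{r-1}$ along each fiber forces the pointwise Cauchy--Schwarz bound to be saturated \emph{everywhere} on $P(E)$, which is precisely $\Psi = \frac{\lambda}{n}\omega$. The main subtlety I anticipate is the bookkeeping of horizontal/vertical bidegrees that isolates $k=2$ as the unique surviving term, and confirming that the scalar Cauchy--Schwarz inequality translates cleanly into an inequality of $(n,n)$-forms which survives wedging with $\omega_{FS}^{r-1}$ and fiber-integration; once these are in hand, the Finsler--Einstein condition supplies exactly the constant required.
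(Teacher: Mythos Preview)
Your proposal is correct and follows essentially the same route as the paper's proof: both derive $s_2(E,G)=\frac{r(r+1)}{8\pi^2}\pi_*(\Psi^2\wedge\omega_{FS}^{r-1})$ from the Kobayashi--Aikou decomposition of $\Xi$, then reduce the inequality to the pointwise Cauchy--Schwarz bound $tr_\omega\Psi^2\ge\frac{1}{n}(tr_\omega\Psi)^2$ (your $\sum_i\lambda_i^2$ is exactly the paper's $tr_\omega\Psi^2$), and finally invoke the Finsler--Einstein condition $tr_\omega\Psi=\lambda$. Your write-up is in fact more explicit about the bidegree bookkeeping isolating $k=2$ and about why $\pi_*\omega_{FS}^{r-1}=1$, points the paper leaves implicit.
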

\begin{proof} From (\ref{2.15}), one has
\begin{align}
\begin{split}
  s_{2}(E,G)\w\omega^{n-2}&=\frac{r(r+1)}{8\pi^{2}}\int_{P(E)/M}\Psi^{2}\omega^{r-1}_{FS}\w\omega^{n-2}\\
  &=\frac{r(r+1)}{8\pi^{2}n(n-1)}\left[\int_{P(E)/M}((tr_{\omega}\Psi)^2-tr_{\omega}\Psi^2)\omega^{r-1}_{FS}\right]\omega^{n}\\
  &\leq\frac{r(r+1)}{8\pi^{2}n(n-1)}\left[\int_{P(E)/M}((tr_{\omega}\Psi)^2-\frac{1}{n}(tr_{\omega}\Psi)^2)\omega^{r-1}_{FS}\right]\omega^{n}\\
  &=\frac{r(r+1)}{8\pi^{2}n^{2}}\lambda^{2}\omega^{n},
  \end{split}
\end{align}
where $tr_{\omega}\Psi^2$ is defined as
\begin{align}\label{4.d12}
tr_{\omega}\Psi^2:=(K_{i\b{j}\alpha\b{\beta}}\frac{v^{i}\b{v}^{j}}{G})(K_{k\b{l}\gamma\b{\delta}}
\frac{v^{k}\b{v}^{l}}{G})g^{\alpha\b{\delta}}g^{\gamma\b{\beta}}.
\end{align}
Moreover, $n tr_{\omega}\Psi^{2}=(tr_{\omega}\Psi)^2$ if and only if
$$K_{i\b{j}\alpha\b{\beta}}\frac{v^{i}\b{v}^{j}}{G}=\frac{\lambda}{n}g_{\alpha\b{\beta}},$$
that is, $\Psi=\frac{\lambda}{n}\omega$. The proof is completed.
\end{proof}

Note that Deverio's trick fails to give the Kobayashi-L\"{u}bke inequality in Finsler-Einstein case, since it is very hard to construct an easily handled
Finsler metric on $E^*$ from the one on $E$. So we will give a direct proof of the following theorem
  \begin{thm}\label{4.18}
    If $G$ is a Finsler-Einstein metric, then
    $$((r-1)\mathcal{C}_{1}(E,G)^{2}-2r\mathcal{C}_{2}(E,G))\w\omega^{n-2}\leq 0$$
    at every point of $M$. The equality holds if and only if
    $\Psi=\frac{2\pi}{r}\mathcal{C}_{1}(E,G).$
  \end{thm}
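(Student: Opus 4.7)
The plan is to reduce the statement to a fiberwise Cauchy--Schwarz inequality, with the Finsler-Einstein condition playing the crucial role of cancelling extraneous trace terms. First I would use $\mathcal{C}_1(E,G)=-s_1(E,G)$ and $\mathcal{C}_2(E,G)=s_1(E,G)^2-s_2(E,G)$ from (\ref{2.16}) to rewrite
\begin{align*}
(r-1)\mathcal{C}_1(E,G)^2-2r\mathcal{C}_2(E,G)=-(r+1)\mathcal{C}_1(E,G)^2+2r\,s_2(E,G),
\end{align*}
so the target inequality becomes $(r+1)\mathcal{C}_1(E,G)^2\wedge\omega^{n-2}\geq 2r\,s_2(E,G)\wedge\omega^{n-2}$. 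By (\ref{4.11}) and (\ref{2.115}) we have $\mathcal{C}_1(E,G)=\tfrac{r}{2\pi}\pi_*(\Psi\wedge\omega_{FS}^{r-1})$ and $s_2(E,G)=\tfrac{r(r+1)}{8\pi^2}\pi_*(\Psi^2\wedge\omega_{FS}^{r-1})$; setting $B:=\pi_*(\Psi\wedge\omega_{FS}^{r-1})$, the common prefactor $r^2(r+1)/(4\pi^2)$ cancels and the claim is reduced to
\begin{align*}
B^2\wedge\omega^{n-2}\geq \pi_*(\Psi^2\wedge\omega_{FS}^{r-1})\wedge\omega^{n-2}.
\end{align*}

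I would then work pointwise at any $z\in M$. Apply the standard K\"{a}hler identity $\alpha\wedge\alpha\wedge\omega^{n-2}=\tfrac{1}{n(n-1)}[(tr_\omega\alpha)^2-|\alpha|_\omega^2]\omega^n$ to $\alpha=B$ on $M$, and fiber-integrate the same identity applied to $\alpha=\Psi$ against $\omega_{FS}^{r-1}$. The Finsler-Einstein condition (\ref{4.10}) gives $tr_\omega\Psi=\lambda$ on $P(E)$; combined with $\int_{P(E_z)}\omega_{FS}^{r-1}=1$ (which follows from Lemma \ref{l.2.1}, since $\Psi$ is horizontal and restricts to $0$ on every fiber, so $\Xi^{r-1}|_{P(E_z)}=\omega_{FS}^{r-1}|_{P(E_z)}$), this forces $tr_\omega B=\lambda$ as well and $\pi_*((tr_\omega\Psi)^2\omega_{FS}^{r-1})=\lambda^2=(tr_\omega B)^2$. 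Hence the trace terms cancel on both sides and the problem collapses to $\pi_*(|\Psi|_\omega^2\omega_{FS}^{r-1})\geq |B|_\omega^2$ at $z$.

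This last inequality is a fiberwise Cauchy--Schwarz: in a $g$-unitary frame, writing $\Psi=\sqrt{-1}\,f_{\alpha\bar\beta}(v)\,dz^\alpha\wedge d\bar z^\beta$, each entry satisfies $|\int_{P(E_z)}f_{\alpha\bar\beta}\,\omega_{FS}^{r-1}|^2\leq\int_{P(E_z)}|f_{\alpha\bar\beta}|^2\,\omega_{FS}^{r-1}$, and summing over $\alpha,\beta$ yields the required estimate. Equality forces every $f_{\alpha\bar\beta}$ to be constant on each fiber, i.e., $\Psi$ descends to a $(1,1)$-form on $M$; then $B=\Psi$ and $\mathcal{C}_1(E,G)=\tfrac{r}{2\pi}\Psi$, i.e., $\Psi=\tfrac{2\pi}{r}\mathcal{C}_1(E,G)$, matching the stated equality condition. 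The main subtlety I anticipate is the sign bookkeeping: without the Finsler-Einstein hypothesis, Jensen's inequality applied to $(tr_\omega\Psi)^2$ would contribute a term with the opposite sign and obstruct the argument, so the proof genuinely depends on the trace terms cancelling exactly, which is precisely what (\ref{4.10}) delivers.
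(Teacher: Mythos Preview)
Your proof is correct and follows essentially the same route as the paper's: rewrite via (\ref{2.16}) in terms of $s_1,s_2$, apply the K\"ahler trace identity $\alpha^2\wedge\omega^{n-2}=\tfrac{1}{n(n-1)}[(tr_\omega\alpha)^2-|\alpha|^2_\omega]\,\omega^n$, and finish with a fiberwise Cauchy--Schwarz on the components of $\Psi$. You are in fact more explicit than the paper about the one place the Finsler--Einstein hypothesis is actually used, namely to force the cancellation of the trace terms $(tr_\omega B)^2$ and $\pi_*\big((tr_\omega\Psi)^2\,\omega_{FS}^{r-1}\big)$; the paper's identity (\ref{4.e}) silently absorbs this step.
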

  \begin{proof}
From (\ref{2.16}), a direct computation implies that
\begin{align}\label{4.e}
\begin{split}
&((r-1)\mathcal{C}_{1}(E,G)^{2}-2r\mathcal{C}_{2}(E,G))\w\omega^{n-2}\\
&=\frac{r+1}{n(n-1)}\omega^{n}\w\left[tr_{\omega}s_{1}(E,G)^{2}
  -\int_{P(E)/M}(tr_{\omega}(\frac{r}{2\pi}\Psi)^{2}\w
  \omega^{r-1}_{FS})\right].
\end{split}
\end{align}
Set $(\psi_{i\b{j}})=\frac{r}{2\pi}\Psi$. Since the equality (\ref{4.e}) holds pointwise, we can do computations in a normal coordinate system near $x\in M$ with $g_{\alpha\b{\beta}}=\delta_{\alpha\beta}$ and we have
  \begin{align*}
   &tr_{\omega}s_{1}(E,G)^{2}-\int_{P(E)/M}(tr_{\omega}(\frac{r}{2\pi}\Psi)^{2}\w\omega^{r-1}_{FS})\\
  &=\sum_{i,j}\int_{P(E)/M}(\psi_{i\b{j}}\omega^{r-1}_{FS})\int_{P(E)/M}(\o{\psi_{i\b{j}}}\omega^{r-1}_{FS})
  -\int_{P(E)/M}(\sum_{ij}|\psi_{i\b{j}}|^{2}\omega^{r-1}_{FS})\\
  &\leq(\int_{P(E)/M}(|\psi_{i\b{j}}|\omega^{r-1}_{FS}))^{2}-\int_{P(E)/M}(|\psi_{i\b{j}}|^{2}\omega^{r-1}_{FS})\leq 0,
  \end{align*}
moreover, the equality holds if and only if $(\psi_{i\b{j}})$ is constant along the fiber. By (\ref{4.11}), this s equivalent to
  $$\Psi=\frac{2\pi}{r}\mathcal{C}_{1}(E,G).$$
\end{proof}

From the above Kobayashi-L\"{u}bke inequality we have
\begin{cor}
Let $(E, G)\to (M,\omega)$ be a Finsler-Einstein vector bundle with respect to $\omega$. Assume that $\omega$ is closed and $\dim M=2$, ${\rm rank }(E)\geq 2$. Then $c_{2}(E,G)$ is a positive $(2,2)$-form if $G$ is of the positive or the negative Kobayashi curvature.
\end{cor}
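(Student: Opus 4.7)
The plan is to combine the Kobayashi--L\"{u}bke type inequality of Theorem \ref{4.18} with the positivity of signed Segre forms from Theorem \ref{t6.2}. Since $n=\dim M=2$, the inequality in Theorem \ref{4.18} reads pointwise on $M$ as
\[
(r-1)\,\mathcal{C}_1(E,G)^2 \;\leq\; 2r\,\mathcal{C}_2(E,G),
\]
because $\omega^{n-2}=1$. So my goal reduces to showing that $\mathcal{C}_1(E,G)^2$ is a strictly positive $(2,2)$-form at every point of $M$; the hypothesis $r\geq 2$ then forces $\mathcal{C}_2(E,G)$ to be pointwise positive.

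First I would note from (\ref{2.16}) that $\mathcal{C}_1(E,G)=-s_1(E,G)$, and then apply Theorem \ref{t6.2} with $k=1$. If $G$ has positive Kobayashi curvature, then $\mathcal{C}_1(E,G)=-s_1(E,G)$ is a positive $(1,1)$-form; if $G$ has negative Kobayashi curvature, then $s_1(E,G)=-\mathcal{C}_1(E,G)$ is positive. In either case $\mathcal{C}_1(E,G)$ is pointwise a Hermitian $(1,1)$-form whose coefficient matrix $(\phi_{\alpha\bar\beta})$ is of definite sign, hence nondegenerate. A short linear-algebra computation on a complex $2$-fold then gives
\[
\mathcal{C}_1(E,G)^2 = 2\det(\phi)\,dz^1\wedge dz^2\wedge d\bar z^1\wedge d\bar z^2,
\]
which is strictly positive since $\det(\phi)>0$ in both cases. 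Combined with the above inequality this yields $\mathcal{C}_2(E,G)\geq\tfrac{r-1}{2r}\mathcal{C}_1(E,G)^2>0$ pointwise.

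Finally, by (\ref{2.c1}) one has $c_2(E,G)-\mathcal{C}_2(E,G)=\partial\bar\partial\tilde c_1(E,G;h)$, so $c_2(E,G)$ and $\mathcal{C}_2(E,G)$ represent the same cohomology class $c_2(E)$; the corollary is then realized by exhibiting the explicit pointwise positive representative $\mathcal{C}_2(E,G)$. I expect no serious obstacle here: both Theorem \ref{4.18} and Theorem \ref{t6.2} have already been established in the paper, and the corollary emerges as a direct combination. The only conceptual point worth keeping straight is the distinction between the two Chern-form constructions $c_2(E,G)$ and $\mathcal{C}_2(E,G)$, which differ by an exact $(2,2)$-form and therefore represent the same class.
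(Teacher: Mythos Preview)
Your argument is correct and follows the paper's own proof exactly: invoke Theorem \ref{t6.2} to see that $\mathcal{C}_1(E,G)$ is a definite $(1,1)$-form (hence $\mathcal{C}_1(E,G)^2>0$ on the surface $M$), then apply the Kobayashi--L\"ubke inequality of Theorem \ref{4.18} with $n=2$ to obtain $\mathcal{C}_2(E,G)\geq\frac{r-1}{2r}\,\mathcal{C}_1(E,G)^2>0$. Your caution about the distinction between $c_2(E,G)$ and $\mathcal{C}_2(E,G)$ is well placed---the paper's proof likewise establishes positivity only for $\mathcal{C}_2(E,G)$, so the ``$c_2$'' in the corollary's statement is almost certainly a typo for ``$\mathcal{C}_2$''.
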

\begin{proof} Under the assumption of this corollary, we know that $\mathcal{C}_{1}(E,G)$ is a positive or negative $(1,1)$-form by Theorem \ref{t6.2}. So $\mathcal{C}_{1}(E,G)^2$ is a positive $(2,2)$-form. Now the above Kobayashi-L\"{u}bke inequality implies that
 \begin{align*}
    \mathcal{C}_{2}(E,G)&\geq \frac{2r}{r-1}\mathcal{C}_{1}(E,G)^{2}>0.
  \end{align*}
\end{proof}

\section{Finsler flat vector bundles}

 An Hermitian metric $h$ on $E$ is flat if its Chern curvature vanishes. In this section, we introduce a definition of a flat Finsler metric which generalizes the flat Hermitian metric to the Finsler case. Moreover, by using a result of Berndtsson \cite{Ber}, we prove that a holomorphic vector bundle is Finsler flat if and only if it is Hermitian flat.

We make the following definition of a flat Finsler metric:
\begin{defn}\label{5.1} A strongly pseudo-convex Finsler metric $G$ on a holomorphic vector bundle $E$ is said to be \emph{flat} if its Kobayashi curvature $\Psi$ vanishes. A holomorphic vector bundle $E$ is called Finsler flat if it admits a flat Finsler metric.
\end{defn}

In local coordinates, this means $\Psi=K_{i\bar j \alpha\bar\beta}{{v^i\bar v^j}\over G}dz^\alpha\wedge d\bar z^\beta=0$.
In \cite{Aikou1}, Aikou  also introduced a definition of the flatness of a Finsler metric $G$. His definition actually requires that all
$K_{i\bar j \alpha\bar\beta}=0$. Clearly, Aikou's definition is stronger than ours.

Let $\pi:E\to M$ be a holomorphic vector bundle over a closed complex manifold $M$. It is known that the direct image $\{H^{0}(P(E_z),\mc{O}_{P(E_z)}(1))|z\in M\}\to M$ of $\mc{O}_{P(E)}(1)$ can be identified naturally with the dual bundle $E^*$, that is, for any $z\in M$, any vector $u\in E_z^*$ can be viewed as a holomorphic section of the bundle $\mc{O}_{P(E_z)}(1)\to P(E_z)$.

From this point-view and following Berndtsson \cite{Ber}, $E^{*}$ can be equipped with the following $L^2$-metric $h$
\begin{align}\label{5.2}
 h_z(u):=\int_{P(E_z)}|u|^{2}e^{-\phi}\omega^\phi_{r-1},\ z\in M,
\end{align}
where $u\in E^*_z\equiv H^{0}(P(E_{z}),\mc{O}_{P(E_z)}(1))$, which can be viewed as a holomorphic section of $\mc{O}_{P(E_z)}(1)$; and $\phi=\log G$, $\omega^\phi=\sqrt{-1}\p_{v}\b{\p}_{v}\phi$, $\omega^\phi_{r-1}=\frac{\m^{r-1}}{(r-1)!}$.

Let $\nabla$ denote the Chern connection on $(E^*, \|\bullet\|)$ associated to the metric $h$ on $E^*$ and let $\Theta^{E^{*}}$ be the curvature of the connection $\nabla$. As a special case of Theorem 3.1 in \cite{Ber}, we have
  \begin{align}\label{1}
  \sqrt{-1}\langle \Theta^{E^{*}}u,u\rangle\leq \frac{1}{(r-1)!}\int_{P(E)/M}\left(|u|^{2}e^{-\phi}(\omega^{\phi})^{r}\right).
  \end{align}
Actually, we can prove (\ref{1}) only for holomorphic sections $u$ of $E^*$ with $\n'u=0$ at any given point $z$. Note that $\n=\n'+\b{\p}$, one has
  \begin{align}\label{5.2222}
  \begin{split}
    \sqrt{-1}\langle\Theta^{E^{*}}u,u\rangle_z&=-(\sqrt{-1}\p\b{\p}\|u\|^{2})_{z}\\
    &=-\int_{P(E)/M}(\sqrt{-1}\p^{\phi}u\w \o{\p^{\phi}u}\w \omega^{\phi}_{r-1}e^{-\phi})+\int_{P(E)/M}(|u|^{2}\omega^{\phi}\w\omega^{\phi}_{r-1} e^{-\phi})\\
 &\leq \frac{1}{(r-1)!}\int_{P(E)/M}|u|^{2}e^{-\phi}(\omega^{\phi})^{r}.
  \end{split}
  \end{align}
  where $\p^{\phi}=e^{\phi}\p e^{-\phi}$.

Now we have the following theorem
\begin{thm}
If $E$ is a holomorphic vector bundle over a closed complex manifold $M$, then  $E$ admits a flat Finsler metric $G$ if and only if it admits a flat Hermitian structure.
\end{thm}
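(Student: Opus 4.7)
The \emph{if} direction is immediate: any Hermitian metric is automatically a strongly pseudo-convex Finsler metric, and in that case the Kobayashi curvature $\Psi$ defined in (\ref{1.7}) reduces to the Chern curvature, so a Hermitian-flat metric is Finsler-flat.

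For the converse, assume $\Psi\equiv 0$ and equip $E^{*}$ with Berndtsson's $L^{2}$-metric $h$ from (\ref{5.2}). The starting point is Lemma \ref{l.2} rewritten as $\sqrt{-1}\partial\bar\partial\log G=-\Psi+2\pi\omega_{FS}$, which under flatness collapses to $\omega^{\phi}=2\pi\omega_{FS}$. Since $\omega_{FS}$ is purely vertical of rank $r-1=\dim_{\mathbb{C}}P(E_{z})$, we have $\omega_{FS}^{r}\equiv 0$ and hence $(\omega^{\phi})^{r}\equiv 0$ on $P(E)$. Feeding this into the Berndtsson inequality (\ref{1}) (after choosing, at each base point, a local holomorphic frame of $E^{*}$ with $\nabla'u=0$ so that the proof line (\ref{5.2222}) applies), I obtain $\sqrt{-1}\langle\Theta^{E^{*}}u,u\rangle\leq 0$ pointwise for every local holomorphic section $u$. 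By polarization $(E^{*},h)$ is Griffiths semi-negative.

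Now I bring in the first Chern form $\mathcal{C}_{1}(E,G)$: by (\ref{4.11}), $\mathcal{C}_{1}(E,G)=\tfrac{r}{2\pi}\int_{P(E)/M}\Psi\wedge\omega_{FS}^{r-1}=0$ pointwise, and Lemma \ref{l2.3} says this represents $c_{1}(E)$; hence $c_{1}(E^{*})=0$ in cohomology. The form $c_{1}(E^{*},h)=\tfrac{\sqrt{-1}}{2\pi}\tr(\Theta^{E^{*}})$ is pointwise semi-negative by the Griffiths bound yet cohomologous to zero, and integrating against $\omega^{n-1}$ for a K\"ahler (or, in general, Gauduchon) form on $M$ forces $c_{1}(E^{*},h)=0$ pointwise. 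Writing $\sqrt{-1}\tr(\Theta^{E^{*}})=\sum_{i}\sqrt{-1}\langle\Theta^{E^{*}}u_{i},u_{i}\rangle$ along a local orthonormal frame exhibits this as a sum of pointwise semi-negative Hermitian forms with zero total, so each term vanishes identically; polarization then yields $\Theta^{E^{*}}\equiv 0$. Thus $h$ is flat on $E^{*}$, and its dual is the desired flat Hermitian metric on $E$.

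The step I expect to be the main obstacle is the pointwise vanishing of $c_{1}(E^{*},h)$ on a non-K\"ahler base, where integration against $\omega^{n-1}$ is no longer tautologically controlled by the cohomology class. The cleanest remedy is to bypass cohomology altogether: compute $\tr(\Theta^{E^{*}})=-\partial\bar\partial\log\det h_{i\bar j}$ directly from the defining integral (\ref{5.2}) and identify the resulting fibre integral with (a multiple of) $\pi_{*}\Xi^{r}=s_{1}(E,G)=-\mathcal{C}_{1}(E,G)$ via (\ref{2.15})--(\ref{2.16}); this gives $c_{1}(E^{*},h)=-\mathcal{C}_{1}(E,G)\equiv 0$ pointwise and the rest of the argument goes through verbatim. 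The alternative route via a Gauduchon metric and Bott--Chern degree works too, but is less transparent.
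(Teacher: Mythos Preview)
Your argument follows the paper's almost exactly: both use Berndtsson's $L^2$-metric on $E^*$, derive Griffiths semi-negativity from $\Psi=0$, and exploit the pointwise vanishing $\mathcal{C}_1(E,G)=0$ to force the full curvature to vanish. The only real difference is the final step: you integrate against a Gauduchon $\omega^{n-1}$, whereas the paper invokes the Bott--Chern transgression of Lemma~\ref{l2.3} to write $c_1(E,h^*)=\sqrt{-1}\,\partial\bar\partial\phi$ for a global smooth $\phi$, and then applies the maximum principle (a plurisubharmonic function on a compact complex manifold is constant). The paper's route is marginally more elementary---no auxiliary Gauduchon metric is needed---but both are valid.

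One caveat about your first proposed remedy: there is no reason to expect $c_1(E^*,h)=-\mathcal{C}_1(E,G)$ \emph{pointwise}. Even with $\Psi=0$, the Berndtsson identity (\ref{5.2222}) retains the Kodaira--Spencer type term $-\int\sqrt{-1}\,\partial^\phi u\wedge\overline{\partial^\phi u}\wedge\omega^\phi_{r-1}e^{-\phi}$, so $\Theta^{E^*}$ is not a priori given by the fibre integral you describe. What both your Gauduchon argument and the paper's maximum principle actually require is that $c_1(E^*,h)$ lie in the zero \emph{Bott--Chern} class (not merely the zero de~Rham class), and this is precisely what Lemma~\ref{l2.3} supplies; you should cite that rather than attempt a direct identification of the two forms.
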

\begin{proof} A flat Hermitiant metric is obviously a flat Finsler metric. For any flat Finsler metric on $E$, one has the Kobayashi curvature $\Psi={\sqrt{-1}}K_{i\bar{j}\alpha\bar{\beta}}\frac{v^{i}\bar{v}^{j}}{G}dz^{\alpha}\w d\b{z}^{\beta}=0.$ So
\begin{align}\label{5.222}
\mathcal{C}_1(E,G)=-\int_{P(E)/M}c_1(\mc{O}_{P(E)}(1),h^G)^r={r\over {2\pi}}\int_{P(E)/M}\Psi\wedge\omega_{FS}^{r-1}=0.
\end{align}
On the other hand, by (\ref{5.2222}) one has
  \begin{align}\label{5.223}
  \sqrt{-1}\langle \Theta^{E^{*}}u,u\rangle_z\leq
  -r\int_{P(E_z)}|u|^{2}K_{i\bar{j}\alpha\bar{\beta}}\frac{v^{i}\bar{v}^{j}}{G}e^{-\phi}\omega_{r-1}dz^{\alpha}\w d\b{z}^{\beta}=0.
  \end{align}
So $(E,h^*)$ is Griffiths semi-positive, where $h^*$ is the dual $L^2$-metric on $E$. Let $R$ denote the Chern curvature of the metric $h^*$. Thus one has
  \begin{align}\label{5.226}
    c_{1}(E,h^*)=\frac{\sqrt{-1}}{2\pi}\text{Tr}(R)\geq 0,
  \end{align}
Combining (\ref{5.222}) and (\ref{5.226}), one gets
\begin{align}\label{5.227}
 c_{1}(E,h^*)=\sqrt{-1}\p\b{\p}\phi\geq 0.
 \end{align}
 Now from (\ref{5.227}) and note that the manifold is closed, we know that the function $\phi$ is constant. So we get $c_{1}(E,h^*)=0$. Again since  $h^*$ is Griffith semi-positive, the holomorphic bisectional curvature of $h^*$ is  zero and  the Chern connection of $h^*$ is flat.

\end{proof}

\end{document}